\newcommand{\column}[1]{\begin{bmatrix} #1 \end{bmatrix}}
\newcommand{\g}{\mathfrak{g}}
\newcommand{\asl}{\widehat{\mathfrak{sl}}} 
\newcommand{\wt}{\operatorname{wt}} 
\newcommand{\RC}{\operatorname{RC}} 
\newcommand{\trop}{\operatorname{trop}} 
\newcommand{\pr}{\operatorname{pr}} 
\newcommand{\ls}{\operatorname{ls}} 
\newcommand{\iso}{\approx} 
\newcommand{\markbox}{\hspace{-2pt}\times\hspace{-2pt}} 
\newcommand{\bfX}{\mathbf{X}} 
\newcommand{\bfx}{\mathbf{x}} 
\newcommand{\mcT}{\mathcal{T}}
\newcommand{\ZZ}{\mathbb{Z}}
\newcommand{\QQ}{\mathbb{Q}}
\newcommand{\RR}{\mathbb{R}}
\numberwithin{equation}{section}
\theoremstyle{plain}
\newtheorem{thm}{Theorem}[section]
\newtheorem{lemma}[thm]{Lemma}
\newtheorem{prop}[thm]{Proposition}
\newtheorem{conj}[thm]{Conjecture}
\theoremstyle{definition}
\newtheorem{remark}[thm]{Remark}
\newtheorem{example}[thm]{Example}
\lstdefinelanguage{Sage}[]{Python}
{morekeywords={False,sage,True},sensitive=true}
\definecolor{dblackcolor}{rgb}{0.0,0.0,0.0}
\definecolor{dbluecolor}{rgb}{0.01,0.02,0.7}
\definecolor{dgreencolor}{rgb}{0.2,0.4,0.0}
\definecolor{dgraycolor}{rgb}{0.30,0.3,0.30}
\definecolor{darkred}{rgb}{0.7,0,0} 
\newcommand{\defn}[1]{{\color{darkred}\emph{#1}}} 
\title{Rigged configurations as tropicalizations of loop Schur functions}
\author{Travis Scrimshaw}
\address{School of Mathematics, University of Minnesota, 206 Church St. SE, Minneapolis, MN 55455}
\email{tscrimsh@umn.edu}
\urladdr{http://www.math.umn.edu/~tscrimsh/}
\begin{document}

\keywords{crystal, rigged configuration, box-ball system, geometric crystal}
\subjclass[2010]{17B37, 05E10, 37B15}

\thanks{TS was partially supported by the National Science Foundation RTG grant NSF/DMS-1148634.}

\begin{abstract}
We conjecture an explicit formula for the image of a tensor product of Kirillov--Reshetikhin crystals $\bigotimes_{i=1}^m B^{1, s_i}$ under the Kirillov--Schilling--Shimozono bijection.
Our conjectured formula is piecewise-linear, where the shapes are given by the tropicalization of cylindric loop Schur functions and the riggings are given by the tropicalization of loop Schur functions. We prove that our formula changes the riggings by the correct amount based upon the time evolution of the corresponding box-ball system. We show that our formula is correct under the column splitting portion of the Kirillov--Schilling--Shimozono bijection and for $B^{1, s}$.
\end{abstract}

\maketitle

\section{Introduction}

The Korteweg-de Vries (KdV) equation~\cite{Boussinesq1877, KdV1895} is a non-linear partial differential equation that has been used to model shallow water waves in 1 dimension ({\it e.g.}, in a thin channel).
Kruskal and Zabusky~\cite{KZ64} noticed that the solutions separated into distinct solitary waves that retain their shape after interaction, which they called solitons.
In~\cite{GGKM74}, the technique of the inverse scattering transform was invented and applied to the KdV equation, showing that $m$-soliton solutions existed and the KdV equation is an exactly solvable model.

We will be focusing on both a discrete version of the KdV equation, Hirota's discrete KdV equation~\cite{Hirota81}, and an ultradiscrete version, the Takahashi and Satsuma box-ball system~\cite{TS90}.
Our methods to study the discrete KdV equation use the geometric crystals of Berenstein and Kazhdan~\cite{BK00, BK07} and the box-ball system use Kashiwara's crystal bases~\cite{K90, K91}.
More specifically, the discrete KdV equation solitons is modeled using the type $A_n^{(1)}$ affine geometric crystals from~\cite{KNO08, LP12}, and the box-ball system is realized in terms of type $A_n^{(1)}$ Kirillov--Reshetikhin (KR) crystals~\cite{HKOTY99, KKMMNN92}.

A crystal basis is a special basis of a representation of the quantum group $U_q(g)$ that behaves nicely in the limit $q \to 0$. Kashiwara showed~\cite{K91} that all highest weight representations admit a crystal basis.
However, for $U_q'(\g) = U_q([\g, \g])$ with $\g$ of affine type, there exists finite-dimensional representations that do not admit a crystal basis.
Yet, if we restrict to the class of Kirillov--Reshetikhin (KR) modules, then it is conjectured~\cite{HKOTY99, HKOTT02} that they all admit a crystal basis.
This was proven for type $A_n^{(1)}$ in~\cite{KKMMNN92}, all non-exceptional types in~\cite{FOS09}, and some special cases in the exceptional types~\cite{KMOY07, Yamane98}.

Now we restrict ourselves to when $\g = \asl_n$.
Kirillov--Reshetikhin (KR) crystals, the $U_q'(\asl_n)$-crystals corresponding to KR modules, are known to have many remarkable properties.
It is known that KR crystals are perfect crystals~\cite{FOS10}; in particular, any tensor product of KR crystals is connected.
Thus, there exists a unique $U_q'(\asl_n)$-crystal isomorphism, called the combinatorial $R$-matrix, that interchanges two factors.
We can express a box-ball system as a semi-infinite tensor product of KR crystals, where a series of combinatorial $R$-matrices encodes the time evolution~\cite{HHIKTT01}.

In~\cite{B31}, Bethe invented what is now known as the Bethe ansatz to solve the 1-dimensional spin-1/2 Heisenberg spin chain.
Kirillov and Reshetikhin in~\cite{KR86} gave (then conjectural) branching rules to $U_q(\mathfrak{sl}_n)$, where they, along with Kerov, developed combinatorial objects known as rigged configurations~\cite{KKR86, KR86}.
Baxter introduced the corner transfer matrix to solve integrable 2D lattice models by using 1-dimensional lattice paths, which have a natural interpretation as elements in a tensor product of KR crystals~\cite{B89}.
The Hamiltonian of the Heisenberg spin chain commutes with the row-to-row transfer matrix of the 2D lattice model and can be diagonalized simultaneously.
This suggests a relationship between the one-point function of the 2D lattice model and solutions to the Bethe ansatz, which led to the $X = M$ conjecture of~\cite{HKOTY99, KKMMNN92}.
By careful analysis, Kerov, Kirillov, and Reshetikhin in~\cite{KKR86, KR86} gave a bijection between rigged configurations and $(B^{1,1})^{\otimes m}$ on classically highest weight elements.
This was then expanded by Kirillov, Schilling, and Shimozono for general $\bigotimes_{i=1}^m B^{r_i, s_i}$ in~\cite{KSS02}.
It was shown in~\cite{DS06, S06, SW10} that rigged configurations could be given a $U_q'(\asl_n)$-crystal structure such that $\Phi$ is a full $U_q'(\asl_n)$-crystal isomorphism.

The bijection between rigged configurations and KR crystals, which we call the KSS bijection (sometimes known as the KKR bijection), is described recursively, but despite this, it has many remarkable properties.
The one we will focus on is that the KSS bijection sends the combinatorial $R$-matrix to the identity map on rigged configurations.
This results in rigged configurations describing the action-angle variables of the box-ball system and that the rows of the first partition $\nu^{(1)}$ correspond to the solitons of the box-ball system when there is no interaction~\cite{KOSTY06, Takagi05}.
Yet because of its recursive definition, an explicit (closed) description of the KSS bijection is difficult to construct.

The theory of geometric crystals was initiated in~\cite{BK00, BK07} as a algebro-geometric analog of Kashiwara's crystals in order to provide a rational description for the relationships between the Lusztig datum of a crystal.
Berenstein and Kazhdan constructed a geometric crystal that corresponds to, under a process called tropicalization, to the crystal basis of highest weight modules and $U_q^-$ in finite types.
In~\cite{KNO10}, a geometric crystal corresponding to the coherent limit of perfect crystals $B^{1,s}$ given by~\cite{KKM94}, which was then modified in~\cite{LP12} for type $\asl_n$ that tropicalizes to $B^{1,s}$.
Yamada has also given a geometric description of the combinatorial $R$-matrix for $R \colon B^{1,s} \otimes B^{1,s'} \to B^{1,s'} \otimes B^{1,s}$ of type $A_n^{(1)}$~\cite{Yamada01}.
Lam and Pylyavskyy then examined the ring of geometric $R$-matrix invariants in~\cite{LP13}, where they developed the ring of loop symmetric functions.

In~\cite{LPS15}, a conjectural description of the partitions that arise under the image of the KSS bijection for tensor products of the form $\bigotimes_{i=1}^m B^{1,s_i}$ in type $A_n^{(1)}$.
They proved that their conjecture holds for $\nu^{(1)}$, thus giving an explicit description of the rigged configuration corresponding to a state of a box-ball system, not only when they are not interacting.
Their description was based on the tropicalization of a simple ratio between two cylindric loop Schur functions, which are loop symmetric functions.
Moreover, in~\cite{KSY07}, a description of $\Phi^{-1}$ was given in terms of a tropicalization of the $\tau$ function from the Kadomtsev--Petviashvili (KP) hierarchy (see, {\it e.g.},~\cite{JM83} for more information).
In~\cite{HHIKTT01}, it was also shown that time evolution is a tropicalization of the non-autonomous discrete KP equation.

In this paper, we continue the work of~\cite{LPS15} by conjecturing an explicit formula for the riggings under the KSS bijection.
Our conjectured formula is based upon loop Schur functions whose shape grows with time.
We show that our conjectural formula increases by the correct amount, in that the growth is essentially given by adding a \emph{cylindric} semistandard tableau.
Moreover, we show that our conjectural formula respects one of the two parts of the KSS bijection: the column splitting.
(Note that the column reduction never arises in our situation.)
Additionally, we prove our conjectural formula for $B^{1,s}$, {\it i.e.} when there is precisely one factor.
As further evidence of our conjecture, the inverse map was described using piecewise linear maps in~\cite{KSY07}.

We now give some consequences of our conjecture.
This gives further evidence to~\cite{LP12, LPS15} that there is a natural notion of geometric KR crystals.
This work also suggests that the geometric analogs of rigged configurations is played by (cylindric) loop Schur functions.
Furthermore, it suggests that there is a geometric version of the $X = M$ conjecture as a geometric analog of energy was given in~\cite[Thm.~6.9]{LPS15}.
There is further evidence for this since the tropicalization of Baxter's corner transfer matrix is given by the tropicalization of the $\tau$-function of~\cite{KSY07}, which is used to describe $\Phi^{-1}$.
Thus, our formula could give a geometric version of Kostka polynomials, and hence, give geometric versions of Hall--Littlewood polynomials.
Moreover, geometric versions of virtual crystals ({\it i.e.}, (geometric) crystals invariant under diagram foldings) was given in~\cite{KNO08, KNO10}, thus by using virtual crystals, we could extend this to explicit descriptions of the bijection in types $C_n^{(1)}$, $A_{2n}^{(2)}$, $A_{2n}^{(2)\dagger}$, and $D_{n+1}^{(2)}$.
Furthermore, our formula could be used to give a more explicit description of certain statistics on marginally large tableaux~\cite{HL08} to rigged configurations~\cite{SalisburyS15} using a modified form of the bijection~\cite{SalisburyS16}.

Our conjectural formula allows us to give a continuous version of the bijection, giving a positive answer to a conjecture poised in~\cite{Okado15} for the special case of $\bigotimes_{i=1}^N B^{1, s_i}$ in type $A_n^{(1)}$.
This has applications to the tropical periodic Toda lattice, where a similar such map was given in~\cite{Takagi14} for type $A_1^{(1)}$ to linearize the (integrable) system.
Additionally, we could then take $s_i \in \RR_{>0}$ (instead of $s_i \in \ZZ_{>0}$ to form the tensor products $\bigotimes_{i=1}^m B^{1,s_i}$, which would give an analog of the discrete KdV equation.
Then by taking a suitable limit as $m \to \infty$ (with the size of the extra vacuum states varying with it), our formula could potentially be used to (re)construct solutions to the KdV equation.

This paper is organized as follows. In Section~\ref{sec:background}, we fix our notation and give the necessary background. In Section~\ref{sec:conjectures}, we state our conjectural formula and prove our formulas change following time evolution on rigged configurations. In Section~\ref{sec:KSS_bijection}, we show that our conjectural formula agrees with the column splitting map.

\section{Background}
\label{sec:background}

In this section, we give the necessary background on crystals, box-ball systems, and rigged configurations. Our partitions are in English convention.

\subsection{Crystals}

Consider the affine Lie algebra $\asl_n$ with index set $I$, Cartan matrix $(a_{ij})_{i,j \in I}$, simple roots $(\alpha_i)_{i \in I}$, simple coroots $(h_i)_{i \in I}$, and fundamental weights $(\Lambda_i)_{i \in I}$.
Let $P = \operatorname{span}_{\ZZ} \{ \Lambda_i \mid i \in I \}$ be the weight lattice, and $Q = \operatorname{span}_{\ZZ} \{ \alpha_i \mid i \in I \}$ be the weight lattice.
Let $I_0 = I \setminus \{0\}$ be the index set of the corresponding classical Lie algebra $\mathfrak{sl}_n$.
Denote the non-degenerate pairing by $\langle\ ,\ \rangle$, and recall that $\langle h_i, \alpha_j \rangle = a_{ij}$ and $\langle h_i, \Lambda_j \rangle = \delta_{ij}$.

Let $U_q'(\asl_n) = U_q([\asl_n, \asl_n])$ denote the quantum group of the derived subalgebra of $\asl_n$. For convenience, we define the \defn{level-$0$ fundamental weights} $\varpi_i = \Lambda_i - \Lambda_0$ for all $i \in I_0$.

The \defn{Kirillov--Reshetikhin (KR) crystal} $B^{1,s}$ is a $U_q'(\asl_n)$-crystal that consists of semistandard tableaux in a $1 \times s$ rectangle, which we identify with the left-to-right reading word.
We will also identify an element
\[
\underbrace{1 \cdots 1}_{x_1} \, \underbrace{2 \cdots 2}_{x_2} \, \cdots \, \underbrace{n \cdots n}_{x_n} \in B^{1, s},
\]
where $x_1 + x_2 + \cdots + x_n = s$,
with the vector $(x_1, x_2, \dotsc, x_n)$.
This is called the \defn{coordinate representation} or vector representation of $B^{1,s}$.

\begin{figure}
\[
\begin{tikzpicture}[scale=1.75,baseline=-4]
\node (1) at (0,0) {$\young(1)$};
\node (2) at (1.5,0) {$\young(2)$};
\node (d) at (3.0,0) {$\cdots$};
\node (n-1) at (4.5,0) {$\boxed{n-1}$};
\node (n) at (6,0) {$\young(n)$};
\draw[->,red] (1) to node[below]{\tiny$1$} (2);
\draw[->,blue] (2) to node[below]{\tiny$2$} (d);
\draw[->,dgreencolor] (d) to node[below]{\tiny$n-1$} (n-1);
\draw[->,orange] (n-1) to node[below]{\tiny$n$} (n);
\draw[->] (n) .. controls (5, 1) and (1, 1) .. node[above]{\tiny$0$} (1);
\end{tikzpicture}
\]
\caption{The crystal $B^{1,1}$.}
\label{fig:example_B11}
\end{figure}
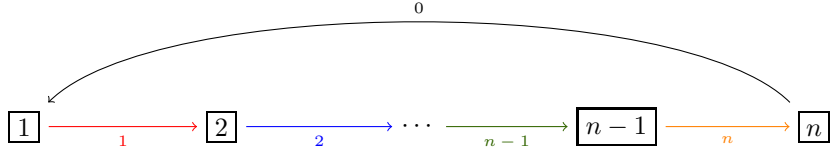

Following~\cite{KN94,Shimozono02}, we define the \defn{crystal operators} $e_i, f_i \colon B^{1,s} \to B^{1,s} \sqcup \{ 0 \}$ by
\begin{subequations}
\label{eq:ef_defn}
\begin{align}
\label{eq:e_defn}
e_i(x_1, \dotsc, x_i, x_{i+1}, \dotsc, x_n) & =
\begin{cases}
(x_1, \dotsc, x_i + 1, x_{i+1} - 1, \dotsc, x_n) & \text{if } x_{i+1} > 0, \\
0 & \text{otherwise},
\end{cases}
\\ \label{eq:F_defn}
f_i(x_1, \dotsc, x_i, x_{i+1}, \dotsc, x_n) & =
\begin{cases}
(x_1, \dotsc, x_i - 1, x_{i+1} + 1, \dotsc, x_n) & \text{if } x_i > 0, \\
0 & \text{otherwise},
\end{cases}
\end{align}
\end{subequations}
where $i \in I := \{0, 1, \dotsc, n-1\}$ and the indices are understood modulo $n$.
That is to say, if $x_n > 0$, then $f_0(x_1, \dotsc, x_n) = (x_1+1, \dotsc, x_n-1)$.
Furthermore, we define statistics
\begin{equation}
\label{eq:ep_phi_defn}
\varepsilon_i(b) = \max \{k \mid e_i^k(b) \neq 0\},
\qquad\qquad
\varphi_i(b) = \max \{k \mid f_i^k(b) \neq 0\},
\end{equation}
and define the weight function $\wt \colon B \to P$, where $P = \ZZ^n / (1, 1, \dotsc, 1)$, by
\begin{equation}
\label{eq:wt_defn}
\wt(x_1, \dotsc, x_n) = x_1 e_1 + \dotsm + x_n e_n.
\end{equation}
For the precise definition of a $U_q'(\asl_n)$-crystal, we refer the reader to~\cite{HK02}.

We will also require the more general KR crystals $B^{r,s}$.
We first note that the diagram automorphism given by $i \mapsto i+1$ descends to a twisted $U_q'(\asl_n)$-crystal isomorphism $\pr \colon B^{r,s} \to B^{r,s}$, which is known as \defn{promotion}~\cite{BST10, Haiman92, Shimozono02}.
Following~\cite{Shimozono02}, the KR crystal $B^{r,s}$ is the $U_q(\mathfrak{sl}_n)$-crystal $B^{r,s} \iso B(s\Lambda_r)$, which we realize as semistandard tableau with entries $\{1, 2, \dotsc, n\}$ of~\cite{KN94}, with the remaining crystal structure is described by
\[
e_0 = \pr \circ e_1 \circ \pr,
\hspace{80pt}
f_0 = \pr \circ f_1 \circ \pr,
\]
Equation~\eqref{eq:ep_phi_defn}, and Equation~\eqref{eq:wt_defn}, where $x_i$ counts the number of entries $i$ occurring in $b$.
We remark that on tableaux, $\pr$ is the promotion operator of Sch\"utzenberger~\cite{Sch72}.
We refer the reader to~\cite{OS08, FOS09} for more details on the KR crystals $B^{r,s}$.

Kashiwara~\cite{K91} has shown that $U_q'(\asl_n)$-crystals form a monoidal category under the following tensor product. Let $B_1$ and $B_2$ be KR crystals, then we form $B_1 \otimes B_2$ as the Cartesian product $B_1 \times B_2$ with the crystal structure
\begin{align*}
e_i(b_1 \otimes b_2) & = \begin{cases}
e_i(b_1) \otimes b_2 & \text{if } \varepsilon_i(b) > \varphi_i(b), \\
b_1 \otimes e_i(b_2) & \text{if } \varepsilon_i(b) \leq \varphi_i(b),
\end{cases}
\\ f_i(b_1 \otimes b_2) & = \begin{cases}
f_i(b_1) \otimes b_2 & \text{if } \varepsilon_i(b) \geq \varphi_i(b), \\
b_1 \otimes f_i(b_2) & \text{if } \varepsilon_i(b) < \varphi_i(b),
\end{cases}
\\ \wt(b_1 \otimes b_2) & = \wt(b_1) + \wt(b_2),
\end{align*}
with $\varepsilon_i$ and $\varphi_i$ given by Equation~\eqref{eq:ep_phi_defn}.

\begin{remark}
This is the opposite to the convention of Kashiwara.
\end{remark}

In~\cite{K02}, Kashiwara showed that $B = \bigotimes_{k=1}^m B^{r_k,s_k}$ is connected.
Moreover, there is a unique element $u_{\Lambda} \in B$ of weight $\Lambda = \sum_{k=1}^m s_k \varpi_{r_k}$.
Therefore, there is a unique crystal isomorphism $R \colon B \to B'$, where $b'$ is a reordering of the factors of $B$, given by $u_{\Lambda} \mapsto u'_{\Lambda}$, where $u' \in B'$ is the unique element of weight $\Lambda$, called the \defn{combinatorial $R$-matrix}.
Note that $R$ is generally not given by $a \otimes b \mapsto b \otimes a$. We draw the isomorphism $R \colon a \otimes b \to \widetilde{b} \otimes \widetilde{a}$ as
\[
\begin{tikzpicture}
\node (a) at (1, 0) {$a$};
\node (at) at (-1, 0) {$\widetilde{a}$};
\node (b) at (0, 1) {$b$};
\node (bt) at (0, -1) {$\widetilde{b}$};
\draw[->] (a) -- (at);
\draw[->] (b) -- (bt);
\end{tikzpicture}
\]
It is known that the category of KR crystals is a symmetric monoidal category from the natural action of the symmetric group via the combinatorial $R$-matrix.

\subsection{Box-ball systems}

Roughly speaking, a \defn{box-ball system} is a dynamical system where multicolored balls are placed in bins of various sizes.
A \defn{carrier} picks up an untouched $a$-colored ball from right-to-left and move it to the next available position.
This is done once for each color $a$.
A cluster of balls in adjacent bins is called a \defn{soliton}, and each soliton moves with speed equal to its size.
For more information on box-ball systems, we refer the reader to~\cite{IKT12}.

\begin{example}
\label{ex:box_ball_dynamics}
Consider a box-ball system with all bins have size $1$. The following is the evolution of a soliton of length $2$, $3$, and $4$:
\begin{align*}
   t = 0: & \cdots {\color{gray} 1 1 1 1 1 1 1 1 1 1 1 1 1 1 1 1 1 1 1 1 1 1 1 1 1 1 1 1} 2 4 {\color{gray} 1 1 1 1} 2 2 4 {\color{gray} 1 1 1 1} 2 3 3 4
\\ t = 1: & \cdots {\color{gray} 1 1 1 1 1 1 1 1 1 1 1 1 1 1 1 1 1 1 1 1 1 1 1 1 1 1} 2 4 {\color{gray} 1 1 1} 2 2 4 {\color{gray} 1 1 1} 2 3 3 4 {\color{gray} 1 1 1 1}
\\ t = 2: & \cdots {\color{gray} 1 1 1 1 1 1 1 1 1 1 1 1 1 1 1 1 1 1 1 1 1 1 1 1} 2 4 {\color{gray} 1 1} 2 2 4 {\color{gray} 1 1} 2 3 3 4 {\color{gray} 1 1 1 1 1 1 1 1}
\\ t = 3: & \cdots {\color{gray} 1 1 1 1 1 1 1 1 1 1 1 1 1 1 1 1 1 1 1 1 1} 2 2 4 {\color{gray} 1} 2 2 4 {\color{gray} 1 1} 3 3 4 {\color{gray} 1 1 1 1 1 1 1 1 1 1 1 1}
\\ t = 4: & \cdots {\color{gray} 1 1 1 1 1 1 1 1 1 1 1 1 1 1 1 1 1} 2 2 2 4 {\color{gray} 1 1} 2 4 {\color{gray} 1 1} 3 3 4 {\color{gray} 1 1 1 1 1 1 1 1 1 1 1 1 1 1 1}
\\ t = 5: & \cdots {\color{gray} 1 1 1 1 1 1 1 1 1 1 1 1 1} 2 2 2 4 {\color{gray} 1 1 1 1} 2 4 {\color{gray} 1} 3 3 4 {\color{gray} 1 1 1 1 1 1 1 1 1 1 1 1 1 1 1 1 1 1}
\\ t = 6: & \cdots {\color{gray} 1 1 1 1 1 1 1 1 1} 2 2 2 4 {\color{gray} 1 1 1 1 1} 2 3 4 {\color{gray} 1} 3 4 {\color{gray} 1 1 1 1 1 1 1 1 1 1 1 1 1 1 1 1 1 1 1 1 1}
\\ t = 7: & \cdots {\color{gray} 1 1 1 1 1} 2 2 2 4 {\color{gray} 1 1 1 1 1 1} 2 3 4 {\color{gray} 1 1} 3 4 {\color{gray} 1 1 1 1 1 1 1 1 1 1 1 1 1 1 1 1 1 1 1 1 1 1 1}
\\ t = 8: & \cdots {\color{gray} 1} 2 2 2 4 {\color{gray} 1 1 1 1 1 1 1} 2 3 4 {\color{gray} 1 1 1} 3 4 {\color{gray} 1 1 1 1 1 1 1 1 1 1 1 1 1 1 1 1 1 1 1 1 1 1 1 1 1}
\end{align*}
where a ${\color{gray} 1}$ is considered as a empty bin.
\end{example}

\begin{remark}
Because of our tensor product conventions, our box-ball system propagates in the opposite direction to that of the literature.
\end{remark}

More explicitly, we consider an element $b = \cdots \otimes b_3 \otimes b_2 \otimes b_1$ in a semi-infinite tensor product
\[
B = \cdots \otimes B^{1,s_3} \otimes B^{1,s_2} \otimes B^{1,s_1}
\]
such that $b_j = u_{s_j \varpi_1}$ for all $j \gg 1$. In this paper, we only consider the case when $s_i = 1$ for all $i \gg 1$. Let $\lVert b \rVert$ denote the number entries not equal to $1$ in the (left-to-right) reading word of $b$. The time evolution $T_r^{\infty} \colon B \to B$ of the system is given by the combinatorial $R$-matrix:
\begin{equation}
\label{eq:generic_comb_R}
\begin{split}
B \otimes B^{r,k} & \longrightarrow B^{r,k} \otimes B,
\\ b \otimes u_{k\varpi_r} & \longmapsto \widetilde{u} \otimes \widetilde{b},
\end{split}
\end{equation}
where $k > \lVert b \rVert$. The factor $B^{r,k}$ is called the \defn{carrier}. Hence $T_r^{\infty}(b) = \widetilde{b}$. We note that the action of the carrier in the informal description above is for $r = 1$.

By the Yang--Baxter equation, we can break Equation~\eqref{eq:generic_comb_R} into the following steps:
\begin{align*}
\cdots \otimes b_3 \otimes b_2 \otimes b_1 \otimes c^{(1)}
& \overset{R_1}{\mapstochar\relbar\joinrel\longrightarrow}
\cdots \otimes b_3 \otimes b_2 \otimes c^{(2)} \otimes \widetilde{b}_1
\\ & \overset{R_2}{\mapstochar\relbar\joinrel\longrightarrow}
\cdots \otimes b_3 \otimes c^{(3)} \otimes \widetilde{b}_2 \otimes \widetilde{b}_1
\\ & \overset{R_3}{\mapstochar\relbar\joinrel\longrightarrow}
\cdots \otimes c^{(4)} \otimes \widetilde{b}_3 \otimes \widetilde{b}_2 \otimes \widetilde{b}_1,
\end{align*}
where $c^{(1)} = u_{k\varpi_r}$.
We express this diagrammatically by
\[
\begin{tikzpicture}[xscale=1.1]
\node (b1) at (2, 1) {$b_1$};
\node (b1t) at (2, -1) {$\widetilde{b}_1$};
\node (b2) at (0, 1) {$b_2$};
\node (b2t) at (0, -1) {$\widetilde{b}_2$};
\node (b3) at (-2, 1) {$b_3$};
\node (b3t) at (-2, -1) {$\widetilde{b}_3$};
\node (c1) at (3, 0) {$c^{(1)}$};
\node (c2) at (1, 0) {$c^{(2)}$};
\node (c3) at (-1, 0) {$c^{(3)}$};
\node (c4) at (-3, 0) {$c^{(4)}$};
\draw[->] (b1) -- (b1t);
\draw[->] (b2) -- (b2t);
\draw[->] (b3) -- (b3t);
\draw[->] (c1) -- (c2);
\draw[->] (c2) -- (c3);
\draw[->] (c3) -- (c4);
\draw (-3.6, 0) node {$\cdots$};
\end{tikzpicture}.
\]
We note that there is a well-defined limit since each factor in $b$ is eventually $u_{s_i \varpi_1}$ and the combinatorial $R$-matrix simply acts as $u_{s_i \varpi_1} \otimes u_{k \varpi_r} \mapsto u_{k \varpi_r} \otimes u_{s_i \varpi_1}$ for all $i \gg 1$.

For a state $b = \cdots \otimes b_3 \otimes b_2 \otimes b_1$, a soliton is a pair $\{i < j\}$ such that $\lVert b_{i-1} \rVert = \lVert b_{j+1} \rVert = 0$, $\lVert b_k \lVert > 0$ for all $i \leq k \leq j$, and the reading word $b_i b_{i+1} \dotsm b_j$ is weakly increasing from left-to-right. The \defn{amplitude} or \defn{size} of a soliton is defined as $\lVert b_j \otimes \cdots \otimes b_i \rVert$.

\subsection{Rigged configurations}

Let $B = \bigotimes_{k=1}^m B^{r_k,s_k}$. A \defn{configuration} $\nu = (\nu^{(a)})_{a \in I_0}$ is a sequence of partitions. Define the partition $\mu^{(a)}(B) := \{s_{k_1}, s_{k_2}, \dotsc, s_{k_p}\}$, where $r_k = a$ if and only if $k \in \{k_1, \dotsc, k_p\}$. Define the \defn{vacancy numbers} by
\[
p_{\ell}^{(a)}(\nu, B) := Q_{\ell}\bigl(\mu^{(a)}(B)\bigr) + Q_{\ell}(\nu^{(a-1)}) - 2 Q_{\ell}(\nu^{(a)}) + Q_{\ell}(\nu^{(a+1)}),
\]
where we define the partitions $\nu^{(0)} = \nu^{(n)} = \emptyset$ and
\[
Q_{\ell}(\xi) = \sum_{j=1}^{\infty} \min(\ell, \xi_j)
\]
for the partition $\xi = (\xi_1, \xi_2, \ldots)$. When $\nu$ and $B$ are clear, we simply write $p_{\ell}^{(a)} = p_{\ell}^{(a)}(\nu, B)$.

A \defn{rigged configuration} is a configuration $\nu$ with such that for each part $\nu_i^{(a)}$, we have an integer $J_i^{(a)}$ called a \defn{rigging} such that
\[
L_{\ell}^{(a)} \leq J_i^{(a)} \leq p_{\ell}^{(a)},
\]
where $\ell = \nu_i^{(a)}$ and $L_i^{(a)}$ are the lower bounds given in~\cite[Def.~4.3]{S06}. As we do not use the lower bounds in this paper, we do not recall them here, but instead we refer the reader to~\cite{S06}. We denote a rigged configuration by $(\nu, J)$, where $J = \{ J_i^{(a)} \}_{a \in I_0, i \in \ZZ_{\geq 0}}$ with $J_i^{(a)} = 0$ whenever $\nu_i^{(a)} = 0$. Let $\RC(B)$ denote the set of all rigged configurations.

Next, we recall the bijection $\Phi \colon \RC(B) \to B$. In order to do so, we recall that there are natural crystal morphisms
\begin{align*}
\ls \colon B^{1,s_m} \otimes B^{1,s_{m-1}} \otimes \cdots \otimes B^{1,s_1} & \longrightarrow B^{1,1} \otimes B^{1,s_m-1} \otimes B^{1,s_{m-1}} \otimes \cdots \otimes B^{1,s_1},
\\ b_m \otimes b_{m-1} \otimes \cdots \otimes b_1 & \longmapsto b_{m,1} \otimes b_m' \otimes b_{m-1} \otimes \cdots \otimes b_1,
\end{align*}
where $b_m = b_{m,1} b_{m,2} \dotsm b_{m,s_m}$ and $b' = b_{m,2} \dotsm b_{m,s_m}$, and
\begin{align*}
\operatorname{lb} \colon B^{1,1} \otimes B^{1,s_{m-1}} \otimes \cdots \otimes B^{1,s_1} & \longrightarrow B^{1,s_{m-1}} \otimes \cdots \otimes B^{1,s_1},
\\ b_m \otimes b_{m-1} \otimes \cdots \otimes b_1 & \longmapsto b_{m-1} \otimes \cdots \otimes b_1.
\end{align*}
The bijection $\Phi \colon \RC(B) \to B$ is defined recursively using the following commuting diagrams:
\begin{align*}
\xymatrixrowsep{3pc}
\xymatrixrowsep{3.5pc}
\xymatrix{\RC(B^{1,s_m} \otimes \cdots \otimes B^{1,s_1}) \ar[r]^-{\Phi} \ar[d]_{\gamma}
& B^{1,s_m} \otimes \cdots \otimes B^{1,s_1} \ar[d]^{\ls}
\\ \RC(B^{1,1} \otimes B^{1,s_m-1} \otimes \cdots \otimes B^{1,s_1}) \ar[r]_-{\Phi}
& B^{1,1} \otimes B^{1,s_m-1} \otimes \cdots \otimes B^{1,s_1}}
&\raisebox{-30pt}{($s_m > 1$)}
\\ \\
\xymatrixrowsep{3pc}
\xymatrixrowsep{3.5pc}
\xymatrix{\RC(B^{1,1} \otimes B^{1,s_{m-1}} \otimes \cdots \otimes B^{1,s_1}) \ar[r]^-{\Phi} \ar[d]_{\delta}
& B^{1,1} \otimes B^{1,s_{m-1}} \otimes \cdots \otimes B^{1,s_1} \ar[d]^{\operatorname{lb}}
\\ \RC(B^{1,s_{m-1}} \otimes \cdots \otimes B^{1,s_1}) \ar[r]_-{\Phi}
& B^{1,s_{m-1}} \otimes \cdots \otimes B^{1,s_1}}
&
\end{align*}
with $\gamma$ be the identity map and $\delta$ being defined as follows.
Determine the minimal $1 \leq \ell^{(1)} \leq \ell^{(2)} \leq \cdots \leq \ell^{(n-1)}$ such that for any $\ell^{(a)} < \infty$, there exists an $i^{(a)}$ such that $\ell^{(a)} = \nu_{i^{(a)}}^{(a)}$ and $J_{i^{(a)}}^{(a)} = p_{\ell^{(a)}}^{(a)}$ and $\ell^{(a)} = \infty$ if no such $i^{(a)}$ exists. Define $\delta(\nu, J) = (\widetilde{\nu}, \widetilde{J})$ as the configuration $\widetilde{\nu}$ obtained by removing a box from the $i^{(a)}$-th row if $\ell^{(a)} < \infty$ and riggings
\[
\widetilde{J}_i^{(a)} = \begin{cases}
J_i^{(a)} & \text{if } i \neq i^{(a)},\\
p_{\ell^{(a)}}^{(a)}(\widetilde{\nu}_{i^{(a)}}^{(a)}, B^{1,s_{m-1}} \otimes \cdots \otimes B^{1,s_1}) & \text{if } i = i^{(a)},
\end{cases}
\]
where we consider $i^{(a)} = \infty$ if $\ell^{(a)} = \infty$.

We say a row $\nu_i^{(a)} = \ell$ is \defn{singular} if $J_i^{(a)} = p_{\ell}^{(a)}$. The map $\delta$ can roughly be described as removing a box from a singular row in $\nu^{(a)}$ of weakly increasing length as $a$ increases and change the corresponding riggings such that they remain singular in the resulting rigged configuration. Define the \defn{return value} of $\delta$ as being the smallest $a$ such that $\ell^{(a-1)} < \infty$. The bijection can then be described as the left-to-right reading word of $B^{1,s_m} \otimes \cdots \otimes B^{1,s_1}$ given by the return values of each application of $\delta$.

For simplicity of (hand) computations, we can perform $\gamma$ and $\delta$ as one operation where we require $s_m \leq \ell^{(1)} \leq \cdots \leq \ell^{(n-1)}$.

\begin{example}
Let $B = B^{1,3} \otimes B^{1,1} \otimes B^{1,2}$ of type $\asl_4$. Consider the rigged configuration
\[
(\nu, J) =
\raisebox{15pt}{$
\begin{array}[t]{r|c|c|c|l}
 \cline{2-4} -1 &\phantom{|}&\phantom{|}&\phantom{|}& -1 \\
 \cline{2-4} -1 &\phantom{|}&\phantom{|}& \multicolumn{2 }{l}{ -1 } \\
 \cline{2-3}
\end{array}
\quad
\begin{array}[t]{r|c|c|c|c|l}
 \cline{2-5} -1 &\phantom{|}&\phantom{|}&\phantom{|}&\phantom{|}& -1 \\
 \cline{2-5}
\end{array}
\quad
\begin{array}[t]{r|c|c|l}
 \cline{2-3} -2 &\phantom{|}&\phantom{|}& -2 \\
 \cline{2-3}
\end{array}
$}.
\]
We perform the bijection $\Phi$ where we mark with a $\times$ the boxes removed under $\delta \circ \gamma$:
\begin{gather*}
\begin{array}[t]{r|c|c|c|l}
 \cline{2-4} -1 &\phantom{|}&\phantom{|}&\markbox& -1 \\
 \cline{2-4} -1 &\phantom{|}&\phantom{|}& \multicolumn{2 }{l}{ -1 } \\
 \cline{2-3}
\end{array}
\quad
\begin{array}[t]{r|c|c|c|c|l}
 \cline{2-5} -1 &\phantom{|}&\phantom{|}&\phantom{|}&\markbox& -1 \\
 \cline{2-5}
\end{array}
\quad
\begin{array}[t]{r|c|c|l}
 \cline{2-3} -2 &\phantom{|}&\phantom{|}& -2 \\
 \cline{2-3}
\end{array}
\\[1em]
\begin{array}[t]{r|c|c|l}
 \cline{2-3} -1 &\phantom{|}&\phantom{|}& -1 \\
 \cline{2-3} -1 &\phantom{|}&\markbox& -1 \\
 \cline{2-3}
\end{array}
\quad
\begin{array}[t]{r|c|c|c|l}
 \cline{2-4} 0 &\phantom{|}&\phantom{|}&\markbox& 0 \\
 \cline{2-4}
\end{array}
\quad
\begin{array}[t]{r|c|c|l}
 \cline{2-3} -2 &\phantom{|}&\phantom{|}& -2 \\
 \cline{2-3}
\end{array}
\\[1em]
\begin{array}[t]{r|c|c|l}
 \cline{2-3} 0 &\phantom{|}&\phantom{|}& -1 \\
 \cline{2-3} 0 &\markbox& \multicolumn{1}{l}{ 0 } \\
 \cline{2-2}
\end{array}
\quad
\begin{array}[t]{r|c|c|l}
 \cline{2-3} 1 &\phantom{|}&\markbox& 1 \\
 \cline{2-3}
\end{array}
\quad
\begin{array}[t]{r|c|c|l}
 \cline{2-3} -2 &\phantom{|}&\markbox& -2 \\
 \cline{2-3}
\end{array}
\\[1em]
\begin{array}[t]{r|c|c|l}
 \cline{2-3} 0 &\phantom{|}&\phantom{|}& -1 \\
 \cline{2-3}
\end{array}
\quad
\begin{array}[t]{r|c|l}
 \cline{2-2} 0 &\phantom{|}& 0 \\
 \cline{2-2}
\end{array}
\quad
\begin{array}[t]{r|c|l}
 \cline{2-2} -1 &\phantom{|}& -1 \\
 \cline{2-2}
\end{array}
\\[1em]
\begin{array}[t]{r|c|c|l}
 \cline{2-3} -1 &\phantom{|}&\markbox& -1 \\
 \cline{2-3}
\end{array}
\quad
\begin{array}[t]{r|c|l}
 \cline{2-2} 0 &\phantom{|}& 0 \\
 \cline{2-2}
\end{array}
\quad
\begin{array}[t]{r|c|l}
 \cline{2-2} -1 &\phantom{|}& -1 \\
 \cline{2-2}
\end{array}
\\[1em]
\begin{array}[t]{r|c|l}
 \cline{2-2} 0 &\markbox& 0 \\
 \cline{2-2}
\end{array}
\quad
\begin{array}[t]{r|c|l}
 \cline{2-2} 0 &\markbox& 0 \\
 \cline{2-2}
\end{array}
\quad
\begin{array}[t]{r|c|l}
 \cline{2-2} -1 &\markbox& -1 \\
 \cline{2-2}
\end{array}
\end{gather*}
Therefore, the result is
\[
\Phi(\nu, J) = \young(334) \otimes \young(1) \otimes \young(24).
\]
\end{example}

The inverse map $\delta^{-1}$ can roughly be described as adding a box to singular row of $\nu^{(a)}$ such that the length is weakly decreasing as $a$ decreases from $b$ to $1$, where $b$ is the value we added to the reading word, and change the corresponding riggings such that they remain singular in the resulting rigged configuration. For $\Phi^{-1}$, we add letters following $\delta^{-1}$ of the reading word going from right-to-left.

It is known that there exists a $U_q'(\asl_n)$-crystal structure on $\RC(B)$~\cite{S06, SW10}. Moreover, by combining the results of~\cite{DS06, KSS02, SW10}, we have the following.

\begin{thm}
Let $B = \bigotimes_{i=1}^m B^{1,s_i}$ be a tensor product of KR crystals of type $\asl_n$. Then
\[
\Phi \colon \RC(B) \to B
\]
is a $U_q'(\asl_n)$-crystal isomorphism. Moreover, rigged configurations are invariant under the combinatorial $R$-matrix: $R(\nu, J) = (\nu, J)$.
\end{thm}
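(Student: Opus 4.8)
The plan is to prove the two assertions separately: first that $\Phi$ is a full $U_q'(\asl_n)$-crystal isomorphism, and then that $R$ acts as the identity, the latter following formally from the former together with the uniqueness defining the combinatorial $R$-matrix.

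For the crystal isomorphism, I would first note that $\Phi$ is a bijection of underlying sets, which is the KSS bijection of~\cite{KSS02}: the recursive maps $\gamma$, $\delta$, $\ls$, and $\operatorname{lb}$ are well-defined and invertible, so the commuting diagrams determine a bijection by induction on $\sum_i s_i$. It then remains to check that $\Phi$ intertwines the crystal operators and preserves weight. I would separate this into the classical part $i \in I_0$ and the affine part $i = 0$. For $i \in I_0$, the identities $\Phi \circ e_i = e_i \circ \Phi$ and $\Phi \circ f_i = f_i \circ \Phi$, together with $\wt \circ \Phi = \wt$, are exactly the statement that the KSS bijection is a classical crystal isomorphism, established in~\cite{DS06}, where the $e_i, f_i$ on $\RC(B)$ are defined and shown compatible with the $\delta$ recursion.

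The affine case $i = 0$ is the crux. On the crystal side $e_0 = \pr \circ e_1 \circ \pr$ and $f_0 = \pr \circ f_1 \circ \pr$, so I would define $e_0, f_0$ on $\RC(B)$ by the same conjugation, using the promotion operator $\widehat{\pr}$ on rigged configurations constructed in~\cite{SW10}. The essential input is that $\widehat{\pr}$ intertwines with promotion through the bijection, that is, $\Phi \circ \widehat{\pr} = \pr \circ \Phi$. Granting this, the affine commutation is a short computation:
\[
\Phi \circ e_0 = \Phi \circ \widehat{\pr} \circ e_1 \circ \widehat{\pr} = \pr \circ \Phi \circ e_1 \circ \widehat{\pr} = \pr \circ e_1 \circ \Phi \circ \widehat{\pr} = \pr \circ e_1 \circ \pr \circ \Phi = e_0 \circ \Phi,
\]
and symmetrically for $f_0$. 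I expect the compatibility $\Phi \circ \widehat{\pr} = \pr \circ \Phi$ to be the main obstacle, since promotion is a global operation not expressed through the local box-removal recursion defining $\delta$; this is precisely the technical content of~\cite{SW10}, and it is what welds the classical result of~\cite{DS06} into a genuinely affine statement.

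For the invariance $R(\nu, J) = (\nu, J)$, the key observation is that the set $\RC(B)$ and its vacancy numbers $p_\ell^{(a)}(\nu, B)$ depend on $B$ only through the partitions $\mu^{(a)}(B)$, which record the multiset $\{s_i\}$ and are insensitive to the order of the tensor factors. Hence for any reordering $B' = \bigotimes_{i=1}^m B^{1, s_{\sigma(i)}}$ we have $\RC(B) = \RC(B')$ as crystals. Since $\Phi \colon \RC(B) \to B$ and $\Phi' \colon \RC(B') \to B'$ are both crystal isomorphisms by the first part, the composite $\Phi' \circ \Phi^{-1} \colon B \to B'$ is a crystal isomorphism. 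As $B$ is connected with a unique element of weight $\Lambda = \sum_i s_i \varpi_1$, and crystal isomorphisms preserve weight, this composite sends $u_\Lambda$ to $u'_\Lambda$; by the uniqueness defining $R$ it therefore equals $R$. Under the identification $\RC(B) = \RC(B')$ this reads $R \circ \Phi = \Phi'$, which is exactly $R(\nu, J) = (\nu, J)$.
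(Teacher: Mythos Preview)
Your proposal is correct and matches the paper's approach for the crystal isomorphism: the paper does not give an in-text proof but simply asserts the theorem ``by combining the results of~\cite{DS06, KSS02, SW10}'', which is exactly your decomposition into the set-bijection from~\cite{KSS02}, the classical crystal compatibility from~\cite{DS06}, and the promotion/affine compatibility $\Phi \circ \widehat{\pr} = \pr \circ \Phi$ from~\cite{SW10}.

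The one genuine difference is in how you obtain $R$-invariance. In the literature (and implicitly in the paper's citation of~\cite{KSS02}), the invariance $R(\nu,J)=(\nu,J)$ is proved \emph{directly} by a combinatorial analysis of the bijection on classically highest weight elements, predating and independent of the full affine crystal isomorphism. You instead deduce it \emph{a posteriori}: since $\RC(B)$ as a $U_q'(\asl_n)$-crystal depends only on the multiset $\{s_i\}$, the two isomorphisms $\Phi$ and $\Phi'$ yield a crystal isomorphism $\Phi'\circ\Phi^{-1}\colon B\to B'$, which by connectedness and uniqueness of the weight-$\Lambda$ element must coincide with $R$. This is clean and conceptually appealing, though it relies on the (true but not entirely trivial) fact that the affine crystal structure on rigged configurations---in particular $\widehat{\pr}$ from~\cite{SW10}---is insensitive to the ordering of the tensor factors. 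The direct approach of~\cite{KSS02} has the advantage of being logically prior and applicable even before the affine crystal structure was available; your approach has the advantage of making the invariance an immediate structural consequence once the isomorphism is in hand.
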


We note some additional key properties of the bijection $\Phi$.
Consider some $b \in B = \bigotimes_{i=1}^m B^{1,s_i}$, and let $(\nu, J) = \Phi^{-1}(b)$.
Then we have $\Phi^{-1}(u_{s \varpi_r} \otimes b) = (\nu, J)$ for any $r \in I_0$ and $s \in \ZZ_{>0}$ from the definition of $\delta^{-1}$.
Note that $\Phi^{-1}(u_{s \varpi_r})$ is the empty rigged configuration.
Therefore, we also have $\Phi^{-1}(b \otimes u_{s \varpi_r}) = (\nu, \widetilde{J})$, where
\[
\widetilde{J}_i^{(a)} = \begin{cases} J_i^{(a)} + \min(s, \nu_i^{(a)}) & \text{if } a = r, \\ J_i^{(a)} & \text{otherwise,} \end{cases}
\]
from the definition of $\delta^{-1}$ and that adding the factor $B^{r,s}$ increases the vacancy numbers $p_{\ell}^{(r)}$ by $\min(s, \ell)$.

Now consider a state $b$ of a box-ball system $B$.
From the above properties, it is clear that $\Phi^{-1}(b)$ is well-defined.
Moreover, from the description of $\Phi$, we have $\lvert \nu^{(1)} \rvert = \lVert b \rVert$.
In fact, it was shown in~\cite{KOSTY06} that lengths of each row in $\nu^{(1)}$ correspond to the sizes of the solitions (except possibly when they are interacting).
Furthermore, to perform the time evolution $T_r^{\infty}$, we add a carrier to the right, which increases the riggings $J_i^{(r)}$ by $\nu_i^{(r)}$ for all $i$, and we denote this new rigged configuration by $(\nu, \widetilde{J})$.
Since rigged configurations are invariant under the combinatorial $R$-matrix, we have
\[
\Phi^{-1}\bigl(T^{\infty}_r(b)\bigr) = (\nu, \widetilde{J})
\]
for any $r \in I_0$ (we can remove the carrier from the left because it is $u_{k \varpi_r}$).
Thus rigged configurations completely describe the action-angle variables of the associated soliton cellular automaton.

\begin{remark}
In~\cite{KOSTY06}, the theory of vertex operators was used to show that rigged configurations give the action-angle variables and describe how the rigged configuration behave under time evolution.
\end{remark}

\begin{example}
Define $b^{(t)} = T_1^{\infty}(b^{(t-1)})$ with $b^{(0)}$ being the initial state from Example~\ref{ex:box_ball_dynamics}. Then in type $\asl_4$, we have
\[
\Phi^{-1}(b^{(t)}) =
\raisebox{20pt}
{$
\begin{array}[t]{r|c|c|c|c|l}
 \cline{2-5} &\phantom{|}&\phantom{|}&\phantom{|}&\phantom{|}& -1 + 4t \\
 \cline{2-5} &\phantom{|}&\phantom{|}&\phantom{|}& \multicolumn{2 }{l}{ 3 + 3t } \\
 \cline{2-4} &\phantom{|}&\phantom{|}& \multicolumn{3 }{l}{ 9 + 2t } \\
 \cline{2-3}
\end{array}
\quad
\begin{array}[t]{r|c|c|c|l}
 \cline{2-4} &\phantom{|}&\phantom{|}&\phantom{|}& -2 \\
 \cline{2-4} &\phantom{|}&\phantom{|}& \multicolumn{2 }{l}{ -1 } \\
 \cline{2-3}
\end{array}
\quad
\begin{array}[t]{r|c|c|c|l}
 \cline{2-4} &\phantom{|}&\phantom{|}&\phantom{|}& -1 \\
 \cline{2-4}
\end{array}
$},
\]
where we omit the vacancy numbers since $p_{\ell}^{(1)} = \infty$, for all $\ell \in \ZZ_{> 0}$, in the limit as the number of factors goes to infinity ({\it i.e.}, as we take $m \to \infty$).
\end{example}

\subsection{Formulas for shapes}

We recall the results of~\cite{LPS15}.
For the remainder of this section, we fix $B = \bigotimes_{i=1}^m B^{1,s_i}$ in type $\asl_n$.

We first define a set of commuting variables $\bfX = \{x_i^{(a)} \mid 1 \leq a \leq n, 1 \leq i \leq m\}$, and we consider the upper indices modulo $n$.
An \defn{$a$-cylindric semistandard skew tableau} of skew shape $\lambda / \mu$, where $\mu \subseteq \lambda$ are partitions, is a filling $\mcT \colon \lambda / \mu \to \{1, 2, \dotsc, m\}$, with $\mcT(i,j)$ being the $i$-th row from the top and $j$-th column from the left, such that $\mcT(i, j) < \mcT(i+1, j)$ and $\mcT(i,j) \leq \mcT(i, j+1)$, where we consider the cells of $\lambda / \mu$ as lying in the cylinder $\ZZ^2 / (n-a, a)$.

That is to say, an $a$-cylindric semistandard skew tableau is a usual semistandard skew tableaux except we have some extra conditions along the boundary. These can be realized by taking a tiling of the tableaux $\mcT$ with shifts by $(n-a, a)$ such that the result is semistandard. We call this infinite shape the \defn{universal cover} of $\mcT$.

\begin{example}
Let $n = 3$. The (skew) tableaux
\[
\mcT = \young(13,2)
\]
is semistandard in the usual sense, where we take $\lambda = 21$ and $\mu = \emptyset$.
However, it is not a $1$-cylindric semistandard (skew) tableaux as the universal cover of $\mcT$
\[
\begin{tikzpicture}[scale=0.4]
\fill[color=red, opacity=0.4] (2,1) rectangle (4,2);
\fill[color=red, opacity=0.4] (2,0) rectangle (3,1);
\fill[color=green, opacity=0.4] (0,0) rectangle (2,1);
\fill[color=green, opacity=0.4] (0,-1) rectangle (1,0);
\fill[color=blue, opacity=0.4] (-2,-1) rectangle (0,0);
\fill[color=blue, opacity=0.4] (-2,-2) rectangle (-1,-1);
\draw (2,2) -- (4,2);
\draw (0,1) -- (4,1);
\draw (-2,0) -- (3,0);
\draw (-2,-1) -- (1,-1);
\draw (-2,-2) -- (-1,-2);
\draw (4,2) -- (4,1);
\draw (3,2) -- (3,0);
\draw (2,2) -- (2,0);
\draw (1,1) -- (1,-1);
\draw (0,1) -- (0,-1);
\draw (-1,0) -- (-1,-2);
\draw (-2,0) -- (-2,-2);
\draw (2.5, 1.5) node {$1$};
\draw (3.5, 1.5) node {$3$};
\draw (0.5, 0.5) node {$1$};
\draw (1.5, 0.5) node {$3$};
\draw (2.5, 0.5) node {$2$};
\draw (-1.5, -0.5) node {$1$};
\draw (-0.5,- 0.5) node {$3$};
\draw (0.5, -0.5) node {$2$};
\draw (-1.5, -1.5) node {$2$};
\draw (4.5, 2) node[rotate=90] {$\ddots$};
\draw (-3, -2) node[rotate=90] {$\ddots$};
\end{tikzpicture}
\]
is not semistandard (in the usual sense).
All other semistandard tableaux of shape $21$ are also $1$-cylindric semistandard.
\end{example}

The \defn{loop Schur function} of skew shape $\lambda / \mu$ is
\begin{equation}
\label{eq:defn_loop_schur}
s_{\lambda / \mu}^{(k)}(\bfX) = \sum_{\mcT} \prod_{\mathfrak{s} \in \lambda / \mu} x_{\mcT(\mathfrak{s})}^{(c(\mathfrak{s}) + k)},
\end{equation}
where we sum over all semistandard tableau $\mcT$ of shape $\lambda / \mu$ with max entry $m$ and $c(\mathfrak{s}) = i - j$ is the content of the cell $\mathfrak{s} = (i, j)$. Note that our notion of content is the negative of the usual one.
The \defn{cylindric loop Schur function} $\overline{s}_{\lambda / \mu,a}^{(k)}$ is given by Equation~\eqref{eq:defn_loop_schur} except the sum is over all \emph{$a$-cylindric} semistandard tableau $\mcT$.

Let $\mathbf{x}_i = \{x_i^{(a)} \mid 1 \leq a \leq n\}$.
Define the \defn{birational $R$-matrix}, or geometric $R$-matrix, as the rational map
\[
R \colon \QQ(x_1^{(1)}, \dotsc, x_1^{(n)}, x_2^{(1)}, \dotsc, x_2^{(n)}) \to \QQ(x_2^{(1)}, \dotsc, x_2^{(n)}, x_1^{(1)}, \dotsc, x_1^{(n)})
\]
given on generators by
\[
R(x_1^{(a)}) = x_2^{(a-1)} \frac{\kappa_{a-1}(\bfx_1, \bfx_2)}{\kappa_a(\bfx_1, \bfx_2)},
\qquad\qquad
R(x_2^{(a)}) = x_1^{(a+1)} \frac{\kappa_{a+1}(\bfx_1, \bfx_2)}{\kappa_a(\bfx_1, \bfx_2)},
\]
where
\[
\kappa_a(\bfx_1, \bfx_2) = \sum_{j=a}^{a+n-1} \prod_{k=a+1}^j x_1^{(k)} \prod_{k=j+1}^{a+n-1} x_2^{(k)}.
\]
We then extend this to $R_k \colon \QQ(\bfX) \to \QQ(\bfX)$ by acting as $R$ on $\bfx_k$ and $\bfx_{k+1}$ and fixing $\bfx_i$ for $i \neq k, k+1$.

\begin{thm}[{\cite{LPS15}}]
A cylindric loop Schur function is in the ring of loop symmetric polynomials $\ZZ[\mathbf{E}]$, where $\mathbf{E} = \{e_i^{(a)} \mid 1 \leq a \leq n, 1 \leq i \leq m\}$ with
\[
e_k^{(a)} := \sum_{1 \leq i_1 < i_2 < \cdots < i_k \leq m} x_{i_1}^{(a)} x_{i_2}^{(a+1)} \cdots x_{i_k}^{(a+k-1)}.
\]
Moreover, the cylindric loop Schur functions are invariant under the birational $R$-matrix.
\end{thm}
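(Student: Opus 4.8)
The statement has two parts, and I would treat the second as a consequence of the first. Since the pullback induced by each $R_k$ is a ring homomorphism on rational functions, it sends a polynomial in the generators $e_j^{(a)}$ to the same polynomial in the images $R_k\bigl(e_j^{(a)}\bigr)$. Hence, once Part~1 places every cylindric loop Schur function inside $\ZZ[\mathbf{E}]$, invariance under each $R_k$ follows as soon as every generator $e_j^{(a)}$ is itself $R_k$-invariant. So the plan is: (i) prove the membership $\overline{s}_{\lambda/\mu,a}^{(k)} \in \ZZ[\mathbf{E}]$, and (ii) prove $R_k\bigl(e_j^{(a)}\bigr) = e_j^{(a)}$ for all $j,a,k$.

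For (i), the key observation is that $e_k^{(a)}$ is precisely the weight generating function of a single tableau column: the strict increase $i_1 < \cdots < i_k$ of the entries down the column matches the summation range of $e_k^{(a)}$, while the shift $a, a+1, \dotsc, a+k-1$ of the upper indices records that the content $c(\mathfrak{s}) = i - j$ increases by one as one moves down a column. I would therefore first establish a loop (dual) Jacobi--Trudi identity for ordinary skew loop Schur functions, expressing $s_{\lambda/\mu}^{(k)}$ as a determinant whose $(i,j)$ entry is $e^{(a_{ij})}_{\lambda'_i - \mu'_j - i + j}$ for upper indices $a_{ij}$ determined by the content along the relevant diagonal (with the conventions $e_0^{(a)} = 1$ and $e_d^{(a)} = 0$ for $d < 0$). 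The proof is the Lindstr\"om--Gessel--Viennot lemma: columns of a tableau become a family of non-intersecting lattice paths, $e^{(a)}_d$ is the weight of a single path, and the sign-reversing involution on intersecting families produces the determinant. As a determinant in the $e^{(a)}_d$, this gives membership in $\ZZ[\mathbf{E}]$ in the non-cylindric case.

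The cylindric case is the main obstacle. Here the columns become paths on the cylinder $\ZZ^2/(n-a,a)$, so one needs a periodic (toric) version of the non-intersecting-path count in which a path may wind around the cylinder before closing up. I would handle this on the universal cover: lift $\overline{s}_{\lambda/\mu,a}^{(k)}$ to non-intersecting families on the infinite periodic strip, then quotient, showing that the signed sum over genuinely periodic families collapses to a $\ZZ$-linear combination of ordinary skew loop Schur functions indexed by the winding residue, each term of which lies in $\ZZ[\mathbf{E}]$ by the previous paragraph. The delicate point is controlling the sign-reversing involution across the seam of the cylinder so that exactly the $a$-cylindric fillings survive; this is where the strict-in-columns and weak-in-rows inequalities along the boundary, built into the definition of an $a$-cylindric tableau, must be matched precisely against the path picture.

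For (ii), I would reduce to two factors. Writing $e_j^{(a)}$ as a sum over index sets $S = \{i_1 < \cdots < i_j\}$ and grouping the sets by how they meet $\{k,k+1\}$, the spectator variables $x_i^{(\cdot)}$ with $i \neq k,k+1$ are fixed by $R_k$ and factor out; because the birational $R$-matrix is defined uniformly in the upper index $a$, the contribution of the pair localizes independently of the spectators. It then suffices to verify invariance of the two-variable loop elementary symmetric functions $e_1^{(a)} = x_1^{(a)} + x_2^{(a)}$ and $e_2^{(a)} = x_1^{(a)} x_2^{(a+1)}$ under the basic birational $R$-matrix. Both reduce to the single three-term relation
\[
x_2^{(a-1)}\kappa_{a-1} + x_1^{(a+1)}\kappa_{a+1} = \bigl(x_1^{(a)} + x_2^{(a)}\bigr)\kappa_a,
\]
together with the telescoping of the $\kappa$-ratios in the definition of $R$, which is a direct (if slightly tedious) check on the explicit formula for $\kappa_a$. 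Combining (i) and (ii) yields both assertions of the theorem.
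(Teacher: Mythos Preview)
The paper does not give its own proof of this theorem: it is quoted verbatim from~\cite{LPS15} and used as a black box, so there is no in-paper argument to compare your proposal against. What you have written is therefore not a reconstruction of anything in the present paper but a sketch of how the cited result might be established.

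That said, your outline is broadly sound and in the spirit of the original Lam--Pylyavskyy work. A few comments. Your reduction in~(ii) to the two-variable case is correct: grouping the index sets by their intersection with $\{k,k+1\}$ really does express $e_j^{(a)}$ as a $\QQ(\bfx_i : i\neq k,k+1)$-linear combination of $1$, $e_1^{(b)}(\bfx_k,\bfx_{k+1})$, and $e_2^{(b)}(\bfx_k,\bfx_{k+1})$, so invariance of the generators localises exactly as you claim. Your three-term relation is precisely the identity that yields $R$-invariance of $e_1^{(a)}$; however, the invariance of $e_2^{(a)} = x_1^{(a)} x_2^{(a+1)}$ does not reduce to the \emph{same} relation but to a companion identity of the form $x_2^{(a-1)}\kappa_{a-1}\cdot x_1^{(a+2)}\kappa_{a+2} = x_1^{(a)}\kappa_a \cdot x_2^{(a+1)}\kappa_{a+1}$ (equivalently, a two-step telescoping using $x_1^{(a+1)}\kappa_{a+1}$ and $x_2^{(a)}\kappa_a$), so ``both reduce to the single three-term relation'' overstates things slightly. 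For~(i), the non-cylindric loop Jacobi--Trudi via Lindstr\"om--Gessel--Viennot is indeed how Lam--Pylyavskyy place $s_{\lambda/\mu}^{(k)}$ in $\ZZ[\mathbf{E}]$; the cylindric step you describe (unfold to the universal cover and express $\overline{s}_{\lambda/\mu,a}^{(k)}$ as a $\ZZ$-linear combination of ordinary skew loop Schur functions indexed by winding data) is the right idea, but it is the genuinely nontrivial part of the argument and your sketch would need to be made precise to count as a proof.
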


Next, define a partition $\lambda(a, i)$ recursively by $\lambda(a, 0) := (n - a)^m$, and then $\lambda(a, i+1)$ is given by removing a ribbon strip of size at most $n$ starting from the bottom-left corner from $\lambda(a, i)$. By ribbon strip, we mean that $\lambda(a, i) / \lambda(a, i+1)$ is connected and contains no $2 \times 2$ squares.

Let $\trop$ denote the tropicalization functor, where we essentially replace $+$ with $\min$ and $\cdot$ with $+$. More explicitly, for a polynomial $p(z) = \sum_j c_j z^j$ with $c_j \in \RR_{\geq 0}$, we have
\[
(\trop p)(z) = \min_j \{0 + j \cdot z\}.
\]
For a precise definition, we refer the reader to~\cite{BK00, BK07}.

The following conjecture was given in~\cite{LPS15}.

\begin{conj}
\label{conj:formula_shapes}
Let $p = b_m \otimes \dotsm \otimes b_1 \in B^{1,s_m} \otimes \dotsm \otimes B^{1,s_1}$ of type $\asl_n$. Define $x_j^{(a+j-1)} = \varphi_a(b_{m+1-j})$, {\it i.e.}, the number of occurrences of $a$ in $b_{m+1-j}$. Then for $1 \leq a \leq n - 1$ and $i \in \ZZ_{> 0}$, we have
\[
\trop\left(\frac{\overline{s}^{(0)}_{\lambda(a, i-1),a}}{\overline{s}^{(0)}_{\lambda(a, i),a}}\right)(\bfX) = \nu_i^{(a)},
\]
where $(\nu, J) = \Phi^{-1}(p)$.
\end{conj}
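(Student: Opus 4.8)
The plan is to induct on the recursive structure of the KSS bijection $\Phi$, using as base cases the result for $\nu^{(1)}$ proved in~\cite{LPS15} together with the single-factor case, which can be computed directly. Recall that $\Phi$ is assembled from the column-splitting map $\gamma$ (the identity on rigged configurations) and the box-removal map $\delta$, so the inductive step must show that both sides of the claimed identity transform compatibly under one application of each. Throughout I would write the tropicalized loop Schur functions explicitly as $\trop(\overline{s}^{(0)}_{\lambda,a})(\bfX) = \min_{\mcT} \sum_{\mathfrak{s} \in \lambda} x_{\mcT(\mathfrak{s})}^{(c(\mathfrak{s}))}$, the minimum being over $a$-cylindric semistandard tableaux $\mcT$, so that the tropicalized ratio in the statement is the difference of two such minima over the shapes $\lambda(a,i-1)$ and $\lambda(a,i)$.

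First I would exploit invariance under the combinatorial $R$-matrix. On the rigged-configuration side $R(\nu,J) = (\nu,J)$, so the shape $\nu$ is unchanged by any reordering of tensor factors; on the loop-symmetric side, the quoted Theorem of~\cite{LPS15} gives that each $\overline{s}^{(0)}_{\lambda(a,i),a}$ is invariant under the birational $R$-matrix, and since these functions are subtraction-free their tropicalizations are invariant under the tropicalized $R$-matrix, which is the combinatorial $R$-matrix. Because time evolution $T_r^{\infty}$ is realized by the combinatorial $R$-matrix and leaves $\nu$ fixed, this shows both sides are time-invariant, and it lets me reduce to a separated configuration in which the solitons do not interact. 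In that regime $\nu^{(1)}$ reads off the soliton amplitudes directly (as in~\cite{KOSTY06}), which recovers the base case and isolates the genuinely new content in the higher partitions $\nu^{(a)}$ with $a > 1$.

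The heart of the inductive step is compatibility with $\delta$. Since $\lambda(a,i-1)/\lambda(a,i)$ is a single ribbon strip, I would first prove a \emph{nesting lemma}: a minimizing $a$-cylindric tableau of shape $\lambda(a,i-1)$ restricts to a minimizing one of shape $\lambda(a,i)$, so that the tropicalized ratio equals the optimal tropical cost $\sum_{\mathfrak{s}} x_{\mcT(\mathfrak{s})}^{(c(\mathfrak{s}))}$ of filling that one ribbon. The remaining task is to identify this optimal ribbon cost with the row length $\nu_{i^{(a)}}^{(a)}$ selected by $\delta$. Here $\delta$ chooses, for each color $a$, a singular row of length $\ell^{(a)}$ with $\ell^{(1)} \le \cdots \le \ell^{(n-1)}$ and removes one box, and I would match this choice with the active ribbon by using the content convention $c(\mathfrak{s}) = i - j$ to align the diagonals carrying colors $a-1, a, a+1$ with the three terms $Q_{\ell}(\nu^{(a-1)}) - 2 Q_{\ell}(\nu^{(a)}) + Q_{\ell}(\nu^{(a+1)})$ of the vacancy number $p_{\ell}^{(a)}$. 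Singularity of the selected row, $J_{i^{(a)}}^{(a)} = p_{\ell^{(a)}}^{(a)}$, is then exactly the condition certifying that the corresponding cylindric filling is a tropical minimizer, so removing a box on the rigged side matches peeling one ribbon on the loop Schur side.

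The main obstacle is the simultaneity across colors. For $a = 1$ the active ribbon and its minimizing filling are transparent from the carrier picture, which is why~\cite{LPS15} could settle that case; but for $a > 1$ the rows $\ell^{(a)}$ chosen by $\delta$ are coupled through the weak-increase constraint and the shared singularity condition, and correspondingly the minimizing cylindric tableaux for different $a$ are coupled through the shared diagonals of their universal covers. The crux is therefore a single combinatorial lemma asserting that the family of minimizing cylindric fillings, as $a$ and $i$ vary, is governed by exactly the nested singular-row selection that drives $\delta$; I expect this to require delicate control of how the ribbon-strip recursion $\lambda(a,i-1) \to \lambda(a,i)$ interacts with the promotion and content shifts, and it is the step most likely to demand genuinely new input. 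As independent corroboration, the description of $\Phi^{-1}$ via the tropicalized $\tau$-function in~\cite{KSY07} should match the cylindric loop Schur functions after tropicalization, and verifying that identification would give an alternative route to both the nesting and the ribbon-cost lemmas.
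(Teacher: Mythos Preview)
The statement you are attempting to prove is presented in the paper as a \emph{conjecture} (Conjecture~\ref{conj:formula_shapes}, quoted from~\cite{LPS15}); the paper does not contain a proof of it. What the paper establishes are strictly partial results: the case $a=1$ is attributed to~\cite{LPS15}, the single-factor case $m=1$ is handled in Proposition~\ref{prop:single_factor}, and Section~\ref{sec:KSS_bijection} verifies compatibility with the column-splitting map $\gamma$ (equivalently $\ls$). Compatibility with $\delta$ is left open and is restated as a separate conjecture at the end of Section~\ref{sec:KSS_bijection}. So there is no ``paper's own proof'' to compare your proposal against.

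Your outline is the natural inductive scheme along the recursive definition of $\Phi$, and the portion concerning $\gamma$ matches what the paper actually proves. But the $\delta$ step, which is the entire content of the conjecture beyond what is already known, is not proved in your proposal either: you posit a ``nesting lemma'' (that a minimizing cylindric tableau of shape $\lambda(a,i-1)$ restricts to a minimizing one of shape $\lambda(a,i)$) and then an identification of the optimal ribbon cost with the singular-row length $\ell^{(a)}$ selected by $\delta$, but you supply no argument for either. Indeed you yourself flag the cross-color coupling as ``the step most likely to demand genuinely new input.'' That is precisely the obstruction; absent a proof of those two lemmas, what you have written is a restatement of why the conjecture is plausible and how one would organize a proof, not a proof. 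The appeal to~\cite{KSY07} at the end is likewise only a heuristic: matching the tropicalized $\tau$-function with the cylindric loop Schur expressions is another open identification, not an available tool.
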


The following special case is known.

\begin{thm}[{\cite{LPS15}}]
Conjecture~\ref{conj:formula_shapes} holds for $a = 1$ and all $i \in \ZZ_{> 0}$.
\end{thm}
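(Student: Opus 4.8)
The plan is to expand the tropicalized ratio, reduce it to a difference of minima over cylindric tableaux, evaluate those minima through a lattice-path model, and match the telescoping differences against the known soliton description of $\nu^{(1)}$. Since $\trop$ sends products to sums and a sum with nonnegative coefficients to a minimum, the left-hand side becomes
\[
\trop\!\left(\frac{\overline{s}^{(0)}_{\lambda(1,i-1),1}}{\overline{s}^{(0)}_{\lambda(1,i),1}}\right)(\bfX) = M_{i-1} - M_i, \qquad M_j := \min_{\mcT} \sum_{\mathfrak{s} \in \lambda(1,j)} x_{\mcT(\mathfrak{s})}^{(c(\mathfrak{s}))},
\]
where $\mcT$ ranges over $1$-cylindric semistandard tableaux of shape $\lambda(1,j)$ and the variables are specialized by $x_j^{(a+j-1)} = \varphi_a(b_{m+1-j})$. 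Because each $\varphi_a \geq 0$, the $M_j$ are nonincreasing in $j$ and telescope, so the target identity $M_{i-1} - M_i = \nu_i^{(1)}$ is equivalent to $M_0 - M_i = \nu_1^{(1)} + \cdots + \nu_i^{(1)}$ for all $i$. The case $i \to \infty$ then reads $M_0 = \lvert \nu^{(1)} \rvert$, and since $\lambda(1,0) = (n-1)^m$ and $\lvert \nu^{(1)} \rvert = \lVert p \rVert$ as recorded earlier in the excerpt, this gives a built-in consistency check and a natural base for the induction.

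Second, I would set up a tropical Lindström--Gessel--Viennot / network-flow model for $M_j$: via a tropical Jacobi--Trudi expansion of the loop Schur function into the loop symmetric generators, $M_j$ is the minimum total weight of a family of noncrossing lattice paths whose edge weights are the specialized counts $\varphi_a(b_\bullet)$, with the content-mod-$n$ shift dictating which $\varphi_a$ sits on each edge. The cylindric identification $(i,j) \sim (i + n - 1, j+1)$ translates into a periodicity condition on this network, so that $M_j$ is a min-weight flow on a periodic graph. The essential structural lemma to prove here is a \emph{compatibility} (nesting) statement: there is a cost-minimizing cylindric filling of $\lambda(1,i-1)$ whose restriction to $\lambda(1,i)$ is again optimal, so that $M_{i-1} - M_i$ is a genuine marginal cost of filling the ribbon strip $\lambda(1,i-1)/\lambda(1,i)$ rather than an artifact of two incomparable optima.

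Third, I would identify this marginal cost with $\nu_i^{(1)}$. For this I would invoke the interpretation, from~\cite{KOSTY06} and the discussion in the excerpt, that the rows of $\nu^{(1)}$ are the soliton sizes of the associated box-ball system; equivalently, the partial sums $\nu_1^{(1)} + \cdots + \nu_\ell^{(1)}$ form a Greene-type invariant equal to the maximal total length of $\ell$ disjoint weakly increasing segments, which in turn is an explicit expression in the ball counts $\varphi_a(b_\bullet)$. Matching the tropical min-flow telescoping sum against this Greene-type max characterization — the min/max duality being supplied by complementation inside the rectangle $\lambda(1,0) = (n-1)^m$ — yields $M_{i-1} - M_i = \nu_i^{(1)}$. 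I would organize the whole argument by induction on $m$, using invariance under the combinatorial $R$-matrix on both sides (the theorem $R(\nu,J) = (\nu,J)$ for rigged configurations and the stated $R$-matrix invariance of cylindric loop Schur functions) to reduce to a convenient ordering of the factors, together with the factor-addition rules for $\Phi^{-1}$ recorded in the excerpt to run the base and inductive steps.

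The hard part will be controlling the cylindric boundary together with the loop weighting. Concretely, I expect the main obstacle to be proving the nesting lemma in the presence of periodicity: one must rule out that the identification $(i,j)\sim(i+n-1,j+1)$ creates a shortcut on the universal cover that drops the flow cost below the soliton value, and one must show the tropical minimizer can be taken in a canonical greedy form whose content-mod-$n$ bookkeeping aligns precisely with the ball counts entering each soliton. This alignment is exactly where the \emph{loop} (rather than ordinary) structure must be handled with care, and it is the step most likely to require the detailed combinatorics of the $\delta$-recursion rather than a purely symmetric-function argument.
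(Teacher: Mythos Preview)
The paper contains no proof of this theorem: it is stated with the citation \cite{LPS15} and nothing more. There is therefore no ``paper's own proof'' to compare your proposal against; the result is imported wholesale from Lam--Pylyavskyy--Sakamoto.

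That said, a few remarks on your sketch as a stand-alone attempt. Your telescoping reduction to $M_0 - M_i = \nu_1^{(1)} + \cdots + \nu_i^{(1)}$ is sound, and the overall architecture (tropical LGV on a periodic network, then match against a Greene-type description of $\nu^{(1)}$) is indeed the shape of the argument in \cite{LPS15}. But two of your load-bearing claims are not established in the excerpt and are not as innocent as you suggest. First, the assertion that $\nu_1^{(1)} + \cdots + \nu_\ell^{(1)}$ equals ``the maximal total length of $\ell$ disjoint weakly increasing segments'' is not the statement recorded here: the paper only says that rows of $\nu^{(1)}$ match soliton sizes \emph{when solitons are not interacting}, so you would need to supply (or cite) the correct Greene--Kleitman-type invariant valid for arbitrary states. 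Second, your ``nesting lemma'' --- that a minimizer for $\lambda(1,i-1)$ restricts to a minimizer for $\lambda(1,i)$ --- is exactly the heart of the matter, and you yourself flag it as the hard part; until it is proved, the marginal-cost interpretation of $M_{i-1}-M_i$ as the weight of a single ribbon is only heuristic. So what you have written is a plausible outline rather than a proof, and for the actual argument one must go to \cite{LPS15}.
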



\section{Conjectural formula}
\label{sec:conjectures}

In this section, we give our conjectural formula for the riggings under the KSS bijection.

Fix some $a \in I_0$. Consider an initial state $b \in B = \bigotimes_{i=1}^m B^{1,s_i}$ of type $\asl_n$. Let $b_t = (T_a^{\infty})^t(b)$ be the current state after $t$ steps.
We construct a skew partition $\eta^{[i]}_{a,t} = \lambda^{[i]}_{a,t} / \mu^{[i]}_{a,t}$ as follows.
Start with the infinite shape formed from $\lambda(a, i-1)$, considered as contained in an $m \times (n - a)$ rectangle, shifted by $a$ at each step.
Then we make a cut at $i$ rows up from the bottom of some rectangle $r$ and then $i$ columns to the left of the rectangle at $t+1$ copies from $r$.
Lastly, superimpose a copy of $\lambda(a, i)$ to the upper right and bottom left rectangles aligned in the corners.

Next, we define $\widetilde{\eta}^{[i]}_{a,t}$ as the shape given by $\widetilde{t}$ copies of $\lambda(a, i-1)$, where $\widetilde{t}$ equals the number of copies of $\lambda(a, i-1)$ in $\eta^{[i]}_{a,t}$ (alternatively, the length of the first row of $\lambda^{[i]}_{a,t}$ divided by $n-a$).

\begin{example}
Consider $B = \bigotimes_{i=1}^6 B^{1,s_i}$ of type $\asl_7$. We have $\eta^{[1]}_{3,2}$ given by the shape filled in gray in Figure~\ref{fig:example_shape}. We also have $\widetilde{\eta}^{[1]}_{3,2} = 16^2 14^3 11^3 7^3 3^3 / 12^3 8^3 4^3$. Additionally, we have $\eta^{[2]}_{3,2}$ as the shape in Figure~\ref{fig:example_shape_strip}
and $\widetilde{\eta}^{[2]}_{3,2} = 16^1 14^3 10^3 6^3 2^3 / 12^3 8^3 4^3$.
\end{example}

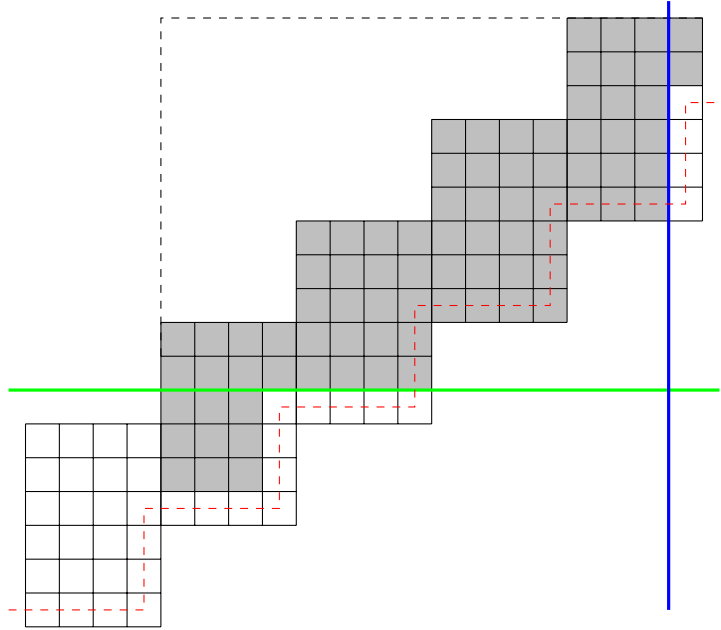
\begin{figure}
\[
\begin{tikzpicture}[scale=.45]
\fill[color=lightgray] (4,6) rectangle (8,8);
\fill[color=lightgray] (4,3) rectangle (7,6);
\fill[color=lightgray] (8,6) rectangle (12,11);
\fill[color=lightgray] (12,8) rectangle (16,14);
\fill[color=lightgray] (16,11) rectangle (19,17);
\fill[color=lightgray] (19,15) rectangle (20,17);
\draw[step=1] (0,-1) grid (4,5);
\draw[step=1] (4,2) grid (8,8);
\draw[step=1] (8,5) grid (12,11);
\draw[step=1] (12,8) grid (16,14);
\draw[step=1] (16,11) grid (20,17);
\draw[dashed] (4,7) -- (4,17) -- (20,17);
\draw[green, very thick] (-.5, 6) -- (20.5, 6);
\draw[blue, very thick] (19, -.5) -- (19, 17.5);
\draw[red, dashed] (-0.5, -0.5) -- (3.5, -0.5) -- (3.5, 2.5) -- (7.5, 2.5) -- (7.5, 5.5) -- (11.5, 5.5) -- (11.5, 8.5) -- (15.5, 8.5) -- (15.5, 11.5) -- (19.5, 11.5) -- (19.5, 14.5) -- (20.5, 14.5);
\end{tikzpicture}
\]
\caption{The shape $\lambda^{[1]}_{3,2}$ given by the dashed lines and the gray boxes. The shape $\mu^{[1]}_{3,2}$ is given by the region between the dashed lines and the gray boxes.}
\label{fig:example_shape}
\end{figure}

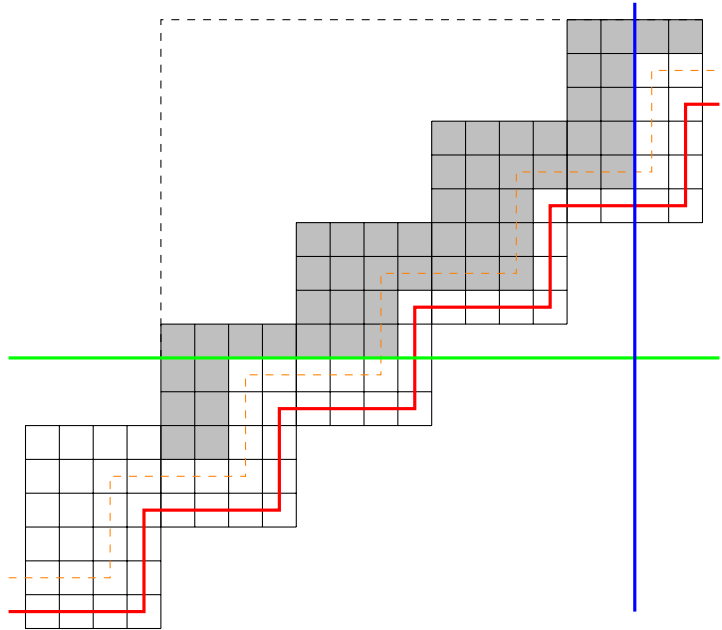
\begin{figure}
\[
\begin{tikzpicture}[scale=.45]
\fill[color=lightgray] (4,4) rectangle (6,7);
\fill[color=lightgray] (4,7) rectangle (8,8);
\fill[color=lightgray] (8,7) rectangle (11,11);
\fill[color=lightgray] (11,9) rectangle (12,11);
\fill[color=lightgray] (12,9) rectangle (15,14);
\fill[color=lightgray] (15,12) rectangle (16,14);
\fill[color=lightgray] (16,12) rectangle (18,17);
\fill[color=lightgray] (18,16) rectangle (20,17);
\draw[step=1] (0,-1) grid (4,5);
\draw[step=1] (4,2) grid (8,8);
\draw[step=1] (8,5) grid (12,11);
\draw[step=1] (12,8) grid (16,14);
\draw[step=1] (16,11) grid (20,17);
\draw[dashed] (4,7) -- (4,17) -- (20,17);
\draw[red, very thick] (-0.5, -0.5) -- (3.5, -0.5) -- (3.5, 2.5) -- (7.5, 2.5) -- (7.5, 5.5) -- (11.5, 5.5) -- (11.5, 8.5) -- (15.5, 8.5) -- (15.5, 11.5) -- (19.5, 11.5) -- (19.5, 14.5) -- (20.5, 14.5);
\draw[orange, dashed] (-0.5, 0.5) -- (2.5, 0.5) -- (2.5, 3.5) -- (6.5, 3.5) -- (6.5, 6.5) -- (10.5, 6.5) -- (10.5, 9.5) -- (14.5, 9.5) -- (14.5, 12.5) -- (18.5, 12.5) -- (18.5, 15.5) -- (20.5, 15.5);
\draw[green, very thick] (-.5, 7) -- (20.5, 7);
\draw[blue, very thick] (18, -.5) -- (18, 17.5);
\end{tikzpicture}
\]
\caption{The skew shape $\eta^{[2]}_{3,2}$ shaded in gray.}
\label{fig:example_shape_strip}
\end{figure}

Next, in order to have the upper-left corner of each box to have color $n$, we need to perform a shift of the content by $-\ell(\mu_{a,t}^{[i]}) \pmod{n}$, where $\ell(\xi)$ is the length of $\xi$.

\begin{conj}
\label{conj:formula_riggings}
Fix some $a \in I_0$. Let $b$ be an initial state of type $\asl_n$ such that all values not equal to $1$ occur in the last $m$ factors.
Let $b_t = (T_a^{\infty})^t(b)$, and let $(\nu, J) = \Phi^{-1}(b_t)$.
There exists some $T \geq 0$ such that
\[
J_i^{(a)} = \trop\left(\frac{s^{(\ell)}_{\eta_{a,t}^{[i]}}}{s^{(\ell)}_{\widetilde{\eta}_{a,t}^{[i]}}}\right)(\bfX),
\]
where $\ell = -\ell(\mu_{a,t}^{[i]}) \pmod{n}$,
for all $t \geq T$.
\end{conj}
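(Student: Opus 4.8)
The plan is to argue by induction on $t$, using the known time-evolution behavior of riggings as the driving mechanism and an explicit computation in the fully separated regime as the base case. Concretely, I would split the claim into a \emph{base case} asserting the identity at one sufficiently large time $t=T$, and an \emph{increment lemma} asserting that the two sides change by the same amount under a single application of $T_a^{\infty}$. For the left-hand side the required behavior is already in hand: adding a carrier on the right sends $(\nu,J)\mapsto(\nu,\widetilde J)$ with $\widetilde J_i^{(a)}=J_i^{(a)}+\nu_i^{(a)}$ and all other riggings unchanged, while $\nu$ itself is invariant because rigged configurations are fixed by the combinatorial $R$-matrix. Hence $J_i^{(a)}(t)=J_i^{(a)}(T)+(t-T)\,\nu_i^{(a)}$ for all $t\ge T$. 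Here the variables $\bfX$ are read off once from the initial state $b$ and held fixed, so that all $t$-dependence lives in the growing skew shapes; the value $\nu_i^{(a)}=\trop\bigl(\overline{s}^{(0)}_{\lambda(a,i-1),a}/\overline{s}^{(0)}_{\lambda(a,i),a}\bigr)(\bfX)$ given by Conjecture~\ref{conj:formula_shapes} is therefore a single $t$-independent integer, which is consistent precisely because cylindric loop Schur functions are invariant under the birational $R$-matrix whose tropicalization realizes $T_a^{\infty}$.

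The heart of the matter is the increment lemma, namely that
\[
\trop\!\left(\frac{s^{(\ell)}_{\eta^{[i]}_{a,t+1}}}{s^{(\ell)}_{\widetilde\eta^{[i]}_{a,t+1}}}\right)(\bfX)-\trop\!\left(\frac{s^{(\ell)}_{\eta^{[i]}_{a,t}}}{s^{(\ell)}_{\widetilde\eta^{[i]}_{a,t}}}\right)(\bfX)=\nu_i^{(a)}.
\]
The geometry behind this is that the denominator shape $\widetilde\eta^{[i]}_{a,t}$ is, by construction, a straight stack of $\widetilde t$ copies of $\lambda(a,i-1)$, whose loop Schur function tropicalizes to an explicitly computable affine function of $\widetilde t$ (the optimal tableau is obtained by repeating a single periodic column pattern). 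The numerator shape $\eta^{[i]}_{a,t}$ agrees with $\widetilde\eta^{[i]}_{a,t}$ outside a bounded ``cut'' region near the $i$-th row and column, and increasing $t$ by one simply translates this cut region while inserting one more copy of the periodic block. I would prove that the tropically optimal filling of $\eta^{[i]}_{a,t}$ stabilizes to a periodic extension of a fixed local pattern, so that $\trop(s_{\eta_t}/s_{\widetilde\eta_t})$ is eventually affine in $t$, and then identify its slope. The cylindric structure is exactly what lets one compute this slope: the block being appended is an $a$-cylindric strip, and the cylindric loop Schur function theorem recalled above (membership in $\ZZ[\mathbf{E}]$ together with $R$-matrix invariance) forces its tropical contribution to coincide with the shape ratio $\trop\bigl(\overline{s}^{(0)}_{\lambda(a,i-1),a}/\overline{s}^{(0)}_{\lambda(a,i),a}\bigr)=\nu_i^{(a)}$.

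For the base case I would take $T$ large enough that the solitons of $b_T$ are fully separated and non-interacting. In that regime the rigged configuration decomposes cleanly along solitons, so the riggings $J_i^{(a)}$ can be read off directly from the soliton data by unwinding the explicit description of $\delta$ and the column-splitting map $\ls$; simultaneously the cut shapes $\eta^{[i]}_{a,T}$ and $\widetilde\eta^{[i]}_{a,T}$ degenerate into a controlled superposition of copies of $\lambda(a,i-1)$ and $\lambda(a,i)$, whose loop Schur functions factor into products over rows. Matching the two explicit expressions then pins down the additive constant $J_i^{(a)}(T)$ and closes the induction.

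The main obstacle, and the reason the statement is posed as a conjecture rather than proved outright, is twofold. First, the increment lemma invokes $\nu_i^{(a)}=\trop(\cdots)$ from Conjecture~\ref{conj:formula_shapes}, which is established only for $a=1$; for general $a$ the entire argument is conditional on that earlier conjecture, so the unconditional reach of this approach is exactly the $a=1$ slice (and, as a degenerate case, a single factor $B^{1,s}$). Second, and more delicately, tropicalizing a \emph{ratio} of loop Schur functions requires that the tropically optimal tableau on the numerator be compatible with the optimal tableau on the denominator, so that the two minima genuinely subtract rather than merely bound one another. Proving this compatibility for the non-straight cut shapes $\eta^{[i]}_{a,t}$ — equivalently, showing that the periodic minimizer on $\eta^{[i]}_{a,t}$ restricts to the minimizer on the straight stack $\widetilde\eta^{[i]}_{a,t}$ away from the cut — is the technical crux, and it is precisely the point where positivity of the underlying loop symmetric functions must be leveraged to control the tropical cancellation.
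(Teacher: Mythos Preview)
The statement you are addressing is Conjecture~\ref{conj:formula_riggings}, and the paper does \emph{not} prove it: it is explicitly left open. So there is no paper proof to compare against. What the paper establishes are partial results surrounding the conjecture: Theorem~\ref{thm:eventually_stabilizes}, which is essentially your ``increment lemma'' (for $t\gg 0$ the tropical ratio changes by exactly the cylindric shape ratio, so the conjectured formula is correct up to a $t$-independent additive constant); Proposition~\ref{prop:single_factor}, which verifies the full statement for a single factor $m=1$; and Section~\ref{sec:KSS_bijection}, which shows compatibility with the column-splitting map $\gamma$ and reduces the remaining gap to compatibility with $\delta$, itself left as a conjecture.

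Your outline correctly isolates the increment step, and your periodic-minimizer reasoning there is the same mechanism the paper uses (Lemma~\ref{lemma:single_period} feeding into Theorem~\ref{thm:eventually_stabilizes}). Where you diverge is the base case. You propose to pin down the constant by taking $T$ large enough that the solitons are fully separated and then reading off $J_i^{(a)}$ directly from soliton data. The paper does not attempt this route; it instead attacks the constant through the recursive definition of $\Phi$, checking invariance under $\ls/\gamma$ and pushing the problem onto $\delta$. Your approach is more conceptual, but the assertion that in the separated regime ``the cut shapes $\eta^{[i]}_{a,T}$ and $\widetilde\eta^{[i]}_{a,T}$ degenerate \dots\ and the loop Schur functions factor into products over rows'' is not justified and is not obviously true: the skew shapes are defined once and for all from the fixed variables $\bfX$ of the initial state, and soliton separation in $b_T$ has no evident structural effect on the optimal tableau of that fixed shape. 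Turning that heuristic into an actual identity is exactly the missing piece, and it is the same piece the paper leaves open from its side.
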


The following lemma was proven by Pavlo Pylyavskyy.

\begin{lemma}
\label{lemma:single_period}
Fix a cylindric shape $\theta$, and let $\xi$ denote the shape given by $t$ repetitions of $\theta$.
Let $\mcT'$ denote cylindric semistandard tableau of shape $\xi$. Then we have
\[
\wt(\mcT') \geq t \cdot \min_{\mcT} \wt(T),
\]
where the minimum is taken over all cylindric semistandard tableaux $\mcT$ of shape $\theta$.
\end{lemma}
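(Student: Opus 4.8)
The plan is to establish the bound directly for an arbitrary cylindric semistandard tableau $\mcT'$ of shape $\xi$, by extracting from it $t$ cylindric semistandard tableaux of shape $\theta$ that together account for $\wt(\mcT')$. (The reverse inequality is immediate: repeating a minimizing tableau of shape $\theta$ for $t$ periods produces a cylindric tableau of shape $\xi$, so in fact equality holds.) First I would pass to the universal cover: a cylindric semistandard tableau of shape $\theta$ is exactly a semistandard filling of the infinite cover invariant under the cylinder period $P$, and a cylindric semistandard tableau of shape $\xi$ is exactly a $tP$-invariant such filling. The engine of the argument is a valuative property of the tropical weight. Given semistandard fillings $A_1,\dots,A_t$ of a common shape, let $A_{(1)},\dots,A_{(t)}$ denote their entrywise order statistics, where $A_{(s)}$ takes in each cell the $s$-th smallest of the values $A_1,\dots,A_t$ there. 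Entrywise sorting preserves semistandardness---the same one-line check that shows the entrywise minimum and maximum of two semistandard tableaux are semistandard, applied to the weak row and strict column conditions---and, since in each cell the multiset of values is unchanged, at every fixed content grading one has the cellwise identity $\sum_{s=1}^t\wt\bigl(A_{(s)}\bigr)=\sum_{i=1}^t\wt(A_i)$.

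With this in hand, I would take the $t$ period-translates $A_i:=\mcT'(\,\cdot+(i-1)P\,)$ of the $tP$-invariant filling $\mcT'$; each $A_i$ is a semistandard filling of the universal cover, and their entrywise order statistics $\mcU_1,\dots,\mcU_t$ are semistandard by the sorting fact. Crucially each $\mcU_s$ is $P$-invariant, because the multiset $\{\mcT'(\mathfrak{s}+rP)\}_{r=0}^{t-1}$ attached to a cell $\mathfrak{s}$ is unchanged when $\mathfrak{s}$ is shifted by $P$ (using that $\mcT'$ is $tP$-invariant); hence each $\mcU_s$ descends to a genuine cylindric semistandard tableau of shape $\theta$. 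Moreover the entries of $\mcT'$ increase strictly along the period direction $P$ (each step moves down and to the right), so in each cell the values of the $t$ copies are already sorted and the order statistics are precisely the $t$ copies of $\mcT'$ read in their natural order. Consequently $\wt(\mcT')$ decomposes as $\sum_{s=1}^t\wt\bigl(\mcU_s\bigr)$, where the $s$-th summand is computed at the content grading carried by the $s$-th copy. Since each $\mcU_s$ is a valid cylindric tableau of shape $\theta$, each summand is at least $\min_{\mcT}\wt(\mcT)$, and summing the $t$ bounds gives $\wt(\mcT')\ge t\cdot\min_{\mcT}\wt(\mcT)$.

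The step I expect to be the main obstacle is the bookkeeping of the content grading across the $t$ periods. The $t$ copies of $\theta$ inside $\xi$ occupy positions differing by multiples of $P$, so their cells carry contents shifted by multiples of $c(P)\bmod n$; thus the $s$-th summand above is a weight at grading shifted by $(s-1)c(P)$, and to compare all $t$ summands with the single quantity $\min_{\mcT}\wt(\mcT)$ I must know that this minimum is invariant under shifting the grading by $c(P)$. This invariance is precisely the statement that the cylindric loop Schur function $\overline{s}^{(k)}_{\theta}$ is independent of the chosen fundamental domain of the cylinder: shifting the fundamental domain by $P$ replaces the grading $k$ by $k+c(P)$ while reproducing the same function, so its tropicalization, and hence the minimum weight, is unchanged. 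Establishing this symmetry, together with the verification that passing to the universal cover is what repairs the semistandard and cylindric conditions of the extracted single-period tableaux along the seam (the naive in-place restrictions need not be cylindric on their own), is where the real content lies; the order-statistic combinatorics is then routine.
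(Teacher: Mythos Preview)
Your approach is the paper's: take the $t$ period-translates $A_i = \mcT'(\,\cdot + (i-1)P\,)$ on the universal cover, form their entrywise order statistics $\mcU_1,\dots,\mcU_t$, check that each $\mcU_s$ is semistandard and $P$-invariant (hence a cylindric semistandard tableau of shape $\theta$), and use the multiset identity at each cell to get $\wt(\mcT') = \sum_s \wt(\mcU_s) \ge t\min_{\mcT}\wt(\mcT)$. Two points need repair, but neither affects the core argument.

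The claim that ``the entries of $\mcT'$ increase strictly along the period direction $P$ (each step moves down and to the right)'' is false, and in fact self-contradictory: strict monotonicity along $P$ together with the $tP$-periodicity $\mcT'(\mathfrak{s}+tP)=\mcT'(\mathfrak{s})$ would force a strictly increasing periodic sequence in a finite alphabet. Concretely, the cylinder period moves up and to the right in English convention (this is exactly what allows a periodic semistandard filling with bounded entries to exist at all), so cells $\mathfrak{s}$ and $\mathfrak{s}+P$ are not related by any monotone lattice path inside the shape, and the $A_i$ are genuinely scrambled relative to the $\mcU_s$. Simply delete this sentence; the weight identity already follows from the multiset equality $\{\mcU_s(\mathfrak{s})\}_s = \{\mcT'(\mathfrak{s}+kP)\}_k$ at each cell, which you stated earlier.

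Your content-grading worry then evaporates. For an $a$-cylindric shape the period $P$ shifts the content $c=i-j$ by exactly $\pm n$, so $c(P)\equiv 0\pmod n$; this is precisely what makes the color of a cell well defined on the cylinder and is built into the definition of cylindric loop Schur functions. Hence every translate $\mathfrak{s}+kP$ carries the same color as $\mathfrak{s}$, all of the $\mcU_s$ are weighted at the \emph{same} grading, and the comparison $\wt(\mcU_s)\ge\min_{\mcT}\wt(\mcT)$ needs no invariance-under-regrading argument. With these two fixes your proof coincides with the paper's.
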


\begin{proof}
Consider the tiling $\tau$ of the universal cover of $\xi$ with $\mcT'$.
Let $\tau_i$ be the shift of $\tau$ by $i$ periods of $\theta$, so that $\tau_0 = \tau$.
Note the shape of $\tau_i$ is the same as the shape of $\tau_j$ for all $i,j$, just potentially has a different filling, and that $\tau_{i+t} = \tau_i$ because there are exactly $t$ copies of $\theta$ in $\xi$.

Now, superimpose on top of each other $\{\tau_1, \tau_2, \dotsc, \tau_t\}$.
We can think that each cell of the universal cover of $\xi$, which is the same as universal cover of $\theta$, contains exactly $t$ numbers now: the fillings of this cell's $t$ shifts by the period.

Here is a key observation. Consider cells $\mathfrak{s}$ and $\mathfrak{s}'$ such that $\mathfrak{s}'$ is directly to the right of $\mathfrak{s}$.
Let $a_j = \tau_j(\mathfrak{s})$ and $a'_j = \tau_j(\mathfrak{s}')$.
Thus by the semistandard condition, we have $a_j \leq a'_j$ for $1 \leq j \leq t$.
Let
\begin{align*}
\{b_1 \leq b_2 \leq \cdots \leq b_t\} & = \{a_1, a_2, \dotsc, a_t\},
\\ \{b'_1 \leq b'_2 \leq \cdots \leq b'_t\} & = \{a'_1, a'_2, \dotsc, a'_t\}.
\end{align*}
It is easy to see that those new numbers also satisfy $b_j \leq b'_j$ for all $1 \leq j \leq t$.

The analogous statement is true if $\mathfrak{s}'$ is directly below $\mathfrak{s}$ and we replace weak inequalities by strict.
Let $\mcT_i$ be the filling of shape $\theta$ with numbers $b_i$, {\it i.e.} for each cell $\mathfrak{s}$ we fill it with the $i$-th smallest number among $\{a_1, \dotsc, a_t\}$.
This is well defined since all translations of $\mathfrak{s}$ have the same collection of numbers in them, one from each $\tau_i$.

The previous discussion implies that each $\mcT_i$ is a cylindric semistandard tableau of shape $\theta$.
On the other hand, it is clear that $\wt(\mcT') = \sum_{i=1}^t \wt(\mcT_i)$.
Since $\wt(\mcT_i) \geq \wt(\mcT)$ by definition, the statement follows.
\end{proof}

\begin{thm}
\label{thm:eventually_stabilizes}
Fix some $a \in I_0$. Let $b$ be an initial state of type $\asl_n$ such that all values not equal to $1$ occur in the last $m$ factors.
There exists some $T \geq 0$ such that
\[
\trop\left(\frac{s^{(\ell)}_{\eta_{a,t}^{[i]}} s^{(\ell)}_{\widetilde{\eta}_{a,t+1}^{[i]}}}{s^{(\ell)}_{\widetilde{\eta}_{a,t}^{[i]}} s^{(\ell)}_{\eta_{a,t+1}^{[i]}}}\right)(\bfX)
= \trop\left(\frac{\overline{s}^{(0)}_{\lambda(a, i-1),a}}{\overline{s}^{(0)}_{\lambda(a, i),a}}\right)(\bfX),
\]
where $\ell = -\ell(\mu_{a,t}^{[i]}) \pmod{n}$,
for all $t \geq T$.
\end{thm}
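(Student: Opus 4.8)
The plan is to use that tropicalization is a homomorphism into the min-plus semiring, so the left-hand side unfolds into the alternating sum
\[
\trop\bigl(s^{(\ell)}_{\eta_{a,t}^{[i]}}\bigr) + \trop\bigl(s^{(\ell)}_{\widetilde{\eta}_{a,t+1}^{[i]}}\bigr) - \trop\bigl(s^{(\ell)}_{\widetilde{\eta}_{a,t}^{[i]}}\bigr) - \trop\bigl(s^{(\ell)}_{\eta_{a,t+1}^{[i]}}\bigr),
\]
which I would immediately regroup as $\bigl[\trop(s^{(\ell)}_{\widetilde{\eta}_{a,t+1}^{[i]}}) - \trop(s^{(\ell)}_{\widetilde{\eta}_{a,t}^{[i]}})\bigr] - \bigl[\trop(s^{(\ell)}_{\eta_{a,t+1}^{[i]}}) - \trop(s^{(\ell)}_{\eta_{a,t}^{[i]}})\bigr]$, i.e.\ as the difference of the per-time-step increments of the two shape families. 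Here each tropicalized loop Schur function is a minimum-weight problem $\trop(s^{(\ell)}_{\xi})(\bfX) = \min_{\mcT} \sum_{\mathfrak{s}} X^{(c(\mathfrak{s})+\ell)}_{\mcT(\mathfrak{s})}$ over semistandard fillings $\mcT$ of $\xi$, where $X^{(a)}_j = \trop(x_j^{(a)})$. I would stress at the outset that it is essential that all four factors carry the \emph{same} content shift $\ell = -\ell(\mu_{a,t}^{[i]}) \pmod n$ attached to time $t$; it is this common alignment, rather than the naive difference of the conjectured riggings (whose own shifts would differ between $t$ and $t+1$), that makes the identity clean.

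Next I would evaluate each increment. By definition $\widetilde{\eta}_{a,t}^{[i]}$ is $\widetilde{t}$ copies of the cylindric period $\lambda(a,i-1)$, and the step $t \mapsto t+1$ appends exactly one further period. The central tool is Lemma~\ref{lemma:single_period}: combined with the matching upper bound obtained by tiling the optimal single-period cylindric filling, it forces $\trop(s^{(\ell)}_{\widetilde{\eta}_{a,t}^{[i]}}) = \widetilde{t}\cdot\trop(\overline{s}^{(0)}_{\lambda(a,i-1),a}) + (\text{boundary})$, so the increment stabilizes to $\trop(\overline{s}^{(0)}_{\lambda(a,i-1),a})(\bfX)$. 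For the skew family $\eta_{a,t}^{[i]}$ the same analysis applies, but one checks that the repeating bulk unit is now the superimposed copy of $\lambda(a,i)$ rather than the full $\lambda(a,i-1)$; hence its increment stabilizes to $\trop(\overline{s}^{(0)}_{\lambda(a,i),a})(\bfX)$. Subtracting the two increments yields exactly $\trop(\overline{s}^{(0)}_{\lambda(a,i-1),a}) - \trop(\overline{s}^{(0)}_{\lambda(a,i),a})$, which is the right-hand side.

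To make ``increment stabilizes'' precise, I would argue that for $t \geq T$ a minimizing ordinary filling of each finite window decomposes into a genuinely periodic interior, whose per-period contribution is pinned by Lemma~\ref{lemma:single_period} to the cylindric minimum of the period, together with two end-effects and the effect of the cut whose shapes cease to change once $t$ is large. The threshold $T$ is taken past the point where this optimal boundary configuration has frozen. Passing from $t$ to $t+1$ then inserts one additional full period into the interior while leaving both ends and the cut combinatorially unchanged, so the boundary contributions cancel in the alternating combination and only the single period's weight survives.

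The main obstacle I expect is exactly this boundary analysis: verifying that the minimum-weight ordinary semistandard filling of the finite skew windows $\eta_{a,t}^{[i]}$ and $\widetilde{\eta}_{a,t}^{[i]}$ really splits into a periodic interior governed by the cylindric minimum plus a $t$-independent boundary, and that the shift $\ell = -\ell(\mu_{a,t}^{[i]})$ is consistent across all periods so that the interior is honestly cylindric. Lemma~\ref{lemma:single_period} supplies the crucial lower bound that the interior cannot beat $\widetilde{t}$ independent optimal periods, and the tiling construction gives the matching upper bound; the delicate part is showing that in the four-term combination the end-effects of $\eta$ against $\widetilde{\eta}$, and of $t$ against $t+1$, match and cancel, leaving the clean one-period difference. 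Checking the content-shift bookkeeping around the cut, where the $(n-a,a)$ shift and the superimposed $\lambda(a,i)$ meet, is the most error-prone step.
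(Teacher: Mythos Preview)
Your overall strategy---unfold the tropicalization, invoke Lemma~\ref{lemma:single_period}, and argue that for $t\gg 0$ the minimizers become periodic so that only one period's contribution survives---is exactly the paper's approach. The paper's own proof is extremely terse: it fixes the optimal single-period cylindric filling $\mcT$, posits that any minimizer of the large shape has some periodic pattern $\mathcal{P}$, and then argues by a linear growth-rate comparison (base cost plus $t$ times per-period weight) that $\mathcal{P}$ must eventually have per-period weight $\wt(\mcT)$.

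Where your write-up diverges, and where I think there is a genuine gap, is the assertion that ``the repeating bulk unit [of $\eta^{[i]}_{a,t}$] is now the superimposed copy of $\lambda(a,i)$ rather than the full $\lambda(a,i-1)$.'' This is not what the construction says. Both $\eta^{[i]}_{a,t}$ and $\widetilde{\eta}^{[i]}_{a,t}$ are built from the \emph{same} periodic unit $\lambda(a,i-1)$: the paper explicitly defines $\widetilde{\eta}^{[i]}_{a,t}$ as $\widetilde{t}$ copies of $\lambda(a,i-1)$, and $\eta^{[i]}_{a,t}$ starts from the infinite $\lambda(a,i-1)$-shape, with $\lambda(a,i)$ entering only as a modification superimposed on the two end rectangles. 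So if your plan were literally correct---each increment stabilizing separately, to the cylindric minimum of its own bulk period---both increments would stabilize to the same value $\trop(\overline{s}^{(0)}_{\lambda(a,i-1),a})$ and the four-term combination would collapse to zero rather than to the desired ratio.

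The paper avoids this trap by \emph{not} trying to evaluate the two increments separately. It compares an arbitrary eventually-periodic candidate $\mathcal{P}$ (with its base cost $B_{\mathcal P}$) directly against the optimal period $\mcT$ (with base cost $B_{\mcT}$), so that the relevant cancellation happens at the level of the whole alternating expression, with the cut and the $\lambda(a,i)$ end-modifications feeding into the base costs rather than into a putative ``different bulk period.'' Your intuition that the end effects should cancel is on the right track, but the specific mechanism you name for producing $\trop(\overline{s}^{(0)}_{\lambda(a,i),a})$ is not the correct one; that quantity has to emerge from the interaction of the cut with the superimposed $\lambda(a,i)$ at the boundary, not from a change in the bulk period.
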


\begin{proof}
Let $\mcT$ be the minimal weight tableau from Lemma~\ref{lemma:single_period}. Let $\mathcal{P}$ be the periodic result.
Let $k = \wt(\mathcal{P}) - \wt(\mcT)$. If $k > 0$, there exists some $t \geq 0$ such that $B_{\mathcal{P}} + t \wt(\mathcal{P}) \geq B_{\mcT} + t \wt(\mcT)$, where $B_{\mathcal{S}}$ is the base cost of the periodic part $\mathcal{S}$.

If $k = 0$, then by the proof of Lemma~\ref{lemma:single_period}, we have that all minimal elements have the same weight.
In particular, we can take the lex largest tableaux $\mcT$ since they all have the same weight.
Thus we can replace $\mathcal{P}$ by copies of $\mcT$ with the same base and obtain a smaller weight.
\end{proof}

We note that Theorem~\ref{thm:eventually_stabilizes} states that the degree eventually stabilizes. Moreover, it shows that Conjecture~\ref{conj:formula_riggings} holds up to a constant that does not depend on $t \gg 1$.

\begin{example}
\label{ex:sl2}
Consider type $\asl_2$ and $m = 3$. We have
\[
\eta_{1,3}^{[1]} =
\begin{tikzpicture}[scale=.45, baseline=40]
\fill[color=lightgray] (0,2) rectangle (1,3);
\fill[color=lightgray] (1,2) rectangle (2,4);
\fill[color=lightgray] (2,2) rectangle (3,5);
\fill[color=lightgray] (3,3) rectangle (4,6);
\fill[color=lightgray] (4,6) rectangle (5,7);
\draw[step=1] (0,0) grid (1,3);
\draw[step=1] (1,1) grid (2,4);
\draw[step=1] (2,2) grid (3,5);
\draw[step=1] (3,3) grid (4,6);
\draw[step=1] (4,4) grid (5,7);
\draw[green, very thick] (-.5, 2) -- (5.5, 2);
\draw[blue, very thick] (4, -.5) -- (4, 7.5);
\end{tikzpicture}
\qquad\qquad
\eta_{1,5}^{[2]} =
\begin{tikzpicture}[scale=.45, baseline=40]
\fill[color=lightgray] (0,2) rectangle (1,3);
\fill[color=lightgray] (1,3) rectangle (2,4);
\fill[color=lightgray] (2,4) rectangle (3,5);
\fill[color=lightgray] (3,5) rectangle (4,6);
\draw[step=1] (0,0) grid (1,3);
\draw[step=1] (1,1) grid (2,4);
\draw[step=1] (2,2) grid (3,5);
\draw[step=1] (3,3) grid (4,6);
\draw[step=1] (4,4) grid (5,7);
\draw[green, very thick] (-.5, 2) -- (5.5, 2);
\draw[blue, very thick] (4, -.5) -- (4, 7.5);
\draw[red, very thick] (-0.5, 0.5) -- (0.5, 0.5) -- (0.5, 1.5) -- (1.5, 1.5) -- (1.5, 2.5) -- (2.5, 2.5) -- (2.5, 3.5) -- (3.5, 3.5) -- (3.5, 4.5) -- (4.5, 4.5) -- (4.5, 5.5) -- (5.5, 5.5);
\end{tikzpicture}
\qquad\qquad
\eta_{1,t}^{[3]} = \emptyset,
\]
for all $t > 0$.
We consider the (generic) element $b = 1^{\alpha} 2^{\beta} \otimes 1^{\gamma} 2^{\delta} \otimes 1^{\zeta} 2^{\theta}$, so we have
\[
\begin{bmatrix}
x_1^{(1)} & x_2^{(2)} & x_3^{(1)} \\ x_1^{(2)} & x_2^{(1)} & x_3^{(2)}
\end{bmatrix}
=
\begin{bmatrix}
\alpha & \gamma & \epsilon \\ \beta & \delta & \zeta
\end{bmatrix}
\]
Thus, we have
\begin{align*}
\frac{s_{\eta_{1,3}^{[1]}}^{(3)}}{s_{\widetilde{\eta}_{1,3}^{[1]}}^{(3)}} & = \frac{\beta^4 \delta^3 \zeta^2 + \beta^4 \delta^2 \epsilon \zeta^2 + \beta^3 \delta^3 \gamma \zeta^2 + \beta^3 \delta^2 \gamma \epsilon \zeta^2 + \beta^3 \delta^2 \epsilon \zeta^3 + \beta^2 \delta^2 \gamma^2 \epsilon \zeta^2 + \beta^2 \delta^2 \gamma \epsilon \zeta^3}{(\beta + \gamma + \zeta)^4},
\\ \frac{s_{\eta_{1,5}^{[2]}}^{(5)}}{s_{\widetilde{\eta}_{1,5}^{[2]}}^{(5)}} & = (\beta + \gamma + \zeta)^4.
\end{align*}
Therefore, we can check (on a computer) for various small inputs ({\it e.g.}, $x_i^{(a)} \in \{1, \dotsc, 20\}$), we have
\[
J_1^{\{3\}} = \trop\left(\frac{s_{54443/4321}^{(0)}}{s_{54321/4321}^{(0)}}\right)(\bfX),
\qquad\qquad
J_2^{\{5\}} = \trop\left(s_{4321/321}^{(1)}\right)(\bfX),
\]
where $J_i^{\{t\}}$ is the rigging $J_i^{(1)}$ after $t$ time evolutions $T_1^{\infty}$.
\end{example}

\begin{remark}
Note that for $J_1^{(1)}$ in Example~\ref{ex:sl2} the disconnected box, which contributes a factor of $\beta + \gamma + \zeta$, cancels.
Thus we can safely omit disconnected components in $\eta_{a,t}^{[i]}$ (and $\widetilde{\eta}_{a,t}^{[i]}$).
\end{remark}

\begin{example}
\label{ex:sl3}
Consider type $\asl_3$ and $m = 3$. We have
\[
\eta_{1,2}^{[1]} =
\begin{tikzpicture}[scale=.45, baseline=40]
\fill[color=lightgray] (0,1) rectangle (1,2);
\fill[color=lightgray] (0,2) rectangle (2,3);
\fill[color=lightgray] (2,2) rectangle (4,4);
\fill[color=lightgray] (4,2) rectangle (6,5);
\fill[color=lightgray] (6,3) rectangle (7,6);
\fill[color=lightgray] (7,5) rectangle (8,6);
\draw[step=1] (0,0) grid (2,3);
\draw[step=1] (2,1) grid (4,4);
\draw[step=1] (4,2) grid (6,5);
\draw[step=1] (6,3) grid (8,6);
\draw[green, very thick] (-.5, 2) -- (8.5, 2);
\draw[blue, very thick] (7, -.5) -- (7, 7.5);
\end{tikzpicture},
\qquad\qquad
\eta_{1,2}^{[2]} =
\begin{tikzpicture}[scale=.45, baseline=40]
\fill[color=lightgray] (2,3) rectangle (5,4);
\fill[color=lightgray] (4,4) rectangle (6,5);
\draw[step=1] (0,0) grid (2,3);
\draw[step=1] (2,1) grid (4,4);
\draw[step=1] (4,2) grid (6,5);
\draw[step=1] (6,3) grid (8,6);
\draw[green, very thick] (-.5, 3) -- (8.5, 3);
\draw[blue, very thick] (6, -.5) -- (6, 7.5);
\draw[red, very thick] (-0.5, 0.5) -- (1.5, 0.5) -- (1.5, 1.5) -- (3.5, 1.5) -- (3.5, 2.5) -- (5.5, 2.5) -- (5.5, 3.5) -- (7.5, 3.5) -- (7.5, 4.5) -- (8.5, 4.5);
\end{tikzpicture},
\]
and $\eta_{1,t}^{[3]} = \emptyset$ for all $t > 0$.
Therefore, we can check (on a computer) for various small inputs that
\[
J_1^{\{2\}} = \trop\left(\frac{s_{87761/642}^{(0)}}{s_{87531/642}^{(0)}}\right)(\bfX),
\qquad\qquad
J_2^{\{2\}} = \trop\left(s_{65/42}^{(1)}\right)(\bfX) = \trop\left(s_{43/2}^{(2)}\right)(\bfX),
\]
where $J_i^{\{t\}}$ is the rigging $J_i^{(1)}$ after $t$ time evolutions $T_1^{\infty}$.

Additionally, we have
\[
\eta_{2,2}^{[1]} =
\begin{tikzpicture}[scale=.45, baseline=40]
\fill[color=lightgray] (1,3) rectangle (2,5);
\fill[color=lightgray] (2,4) rectangle (3,7);
\draw[step=1] (0,0) grid (1,3);
\draw[step=1] (1,2) grid (2,5);
\draw[step=1] (2,4) grid (3,7);
\draw[step=1] (3,6) grid (4,9);
\draw[green, very thick] (-.5, 3) -- (4.5, 3);
\draw[blue, very thick] (3, -.5) -- (3, 9.5);
\end{tikzpicture},
\hspace{70pt}
\eta_{2,t}^{[2]} = \emptyset,
\]
for all $t > 0$.
Therefore, we can check (on a computer) for various input that
\[
\widetilde{J}_1^{\{2\}} = \trop\left(s_{3332/2211}^{(2)}\right)(\bfX),
\]
where $\widetilde{J}_i^{\{t\}}$ is the rigging $J_i^{(2)}$ after $t$ time evolutions $T_2^{\infty}$.
\end{example}

\begin{remark}
\label{remark:sufficiently_large}
We note that because Conjecture~\ref{conj:formula_riggings} is \emph{eventually}, {\it i.e.} after sufficiently many time evolutions, the rigging is correct, any input $\bfX$ that is sufficiently large may not give the rigging under $\Phi$. Indeed, even for the natural condition that the rigged configuration being $a$-highest weight ($J_i^{(a)} \geq 0$ for all $i$) is not sufficient. Consider $m = 3$ for $\asl_3$ as in Example~\ref{ex:sl3}. Then for
\[
b = \boxed{1^{17}\, 2^8\, 3^{46}} \otimes \boxed{1^{48}\, 2^{42}\, 3^{36}} \otimes \boxed{1^{29}\, 2^{50}\, 3^{11}}\ ,
\]
we have $\trop(s^{(1)}_{65/42})(\bfX) = 143$, but $J_2^{\{2\}} = 144$. However, we have $\trop(s^{(0)}_{875/642})(\bfX) = 234 = J_2^{\{3\}}$.

Another natural approach for removing the necessity for time evolutions is to extend the number of copies of $\lambda(a,i)$ to construct $\eta_{a,t}^{[i]}$. However, the above shows that this would not remove this condition because $\lambda(1,2) = \emptyset$.
\end{remark}

\begin{example}
\label{ex:sl4}
Consider type $\asl_4$ and $m = 3$. We have
\begin{align*}
\eta_{1,2}^{[1]} & =
\begin{tikzpicture}[scale=.45, baseline=40]
\fill[color=lightgray] (0,1) rectangle (2,3);
\fill[color=lightgray] (0,2) rectangle (3,3);
\fill[color=lightgray] (3,2) rectangle (6,4);
\fill[color=lightgray] (6,2) rectangle (9,5);
\fill[color=lightgray] (9,3) rectangle (11,6);
\fill[color=lightgray] (11,5) rectangle (12,6);
\draw[step=1] (0,0) grid (3,3);
\draw[step=1] (3,1) grid (6,4);
\draw[step=1] (6,2) grid (9,5);
\draw[step=1] (9,3) grid (12,6);
\draw[green, very thick] (-.5, 2) -- (12.5, 2);
\draw[blue, very thick] (11, -.5) -- (11, 6.5);
\end{tikzpicture}
\\
\eta_{1,2}^{[2]} & =
\begin{tikzpicture}[scale=.45, baseline=40]
\fill[color=lightgray] (3,3) rectangle (8,4);
\fill[color=lightgray] (6,4) rectangle (10,5);
\fill[color=lightgray] (9,5) rectangle (10,6);
\draw[step=1] (0,0) grid (3,3);
\draw[step=1] (3,1) grid (6,4);
\draw[step=1] (6,2) grid (9,5);
\draw[step=1] (9,3) grid (12,6);
\draw[green, very thick] (-.5, 3) -- (12.5, 3);
\draw[blue, very thick] (10, -.5) -- (10, 6.5);
\draw[red, very thick] (-0.5, 0.5) -- (2.5, 0.5) -- (2.5, 1.5) -- (5.5, 1.5) -- (5.5, 2.5) -- (8.5, 2.5) -- (8.5, 3.5) -- (11.5, 3.5) -- (11.5, 4.5) -- (12.5, 4.5);
\end{tikzpicture}
\\
\eta_{1,2}^{[3]} & =
\begin{tikzpicture}[scale=.45, baseline=40]
\fill[color=lightgray] (3,3) rectangle (4,4);
\fill[color=lightgray] (6,4) rectangle (7,5);
\fill[color=lightgray] (9,5) rectangle (10,6);
\draw[step=1] (0,0) grid (3,3);
\draw[step=1] (3,1) grid (6,4);
\draw[step=1] (6,2) grid (9,5);
\draw[step=1] (9,3) grid (12,6);
\draw[green, very thick] (-.5, 3) -- (12.5, 3);
\draw[blue, very thick] (10, -.5) -- (10, 6.5);
\draw[red, very thick] (-0.5, 0.5) -- (2.5, 0.5) -- (2.5, 1.5) -- (5.5, 1.5) -- (5.5, 2.5) -- (8.5, 2.5) -- (8.5, 3.5) -- (11.5, 3.5) -- (11.5, 4.5) -- (12.5, 4.5);
\draw[orange, very thick] (-0.5, 1.5) -- (1.5, 1.5) -- (1.5, 2.5) -- (4.5, 2.5) -- (4.5, 3.5) -- (7.5, 3.5) -- (7.5, 4.5) -- (10.5, 4.5) -- (10.5, 5.5) -- (12.5, 5.5);
\end{tikzpicture}
\end{align*}
and $\eta_{1,2}^{[4]} = \emptyset$.
Therefore, we can check (on a computer and again for small values as per Remark~\ref{remark:sufficiently_large}) for various input that
\begin{gather*}
J_1^{\{2\}} = \trop\left(\frac{s_{12,11,11,10,2/963}^{(1)}}{s_{11,11,8,5/963}^{(1)}}\right)(\bfX),
\qquad\qquad
J_2^{\{2\}} = \trop\left(\frac{s_{775/63}^{(2)}}{s_{741/63}^{(2)}}\right)(\bfX),
\\
J_3^{\{2\}} = \trop\left(s_{\eta_{1,2}^{[3]}}^{(3)}\right)(\bfX) = \trop\left(s_{4/3}^{(3)}\right)(\bfX),
\end{gather*}
where $J_i^{\{t\}}$ is the rigging $J_i^{(1)}$ after $t$ time evolutions $T_1^{\infty}$.
\end{example}

\begin{prop}
\label{prop:single_factor}
Conjecture~\ref{conj:formula_shapes} and Conjecture~\ref{conj:formula_riggings} describe the image of $\Phi^{-1}$ when $m = 1$.
\end{prop}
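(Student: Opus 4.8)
The plan is to reduce both conjectures, for $m=1$, to explicit monomial computations. The key simplification is that when $m=1$ every entry of a (cylindric) semistandard tableau is forced to equal $1$, so a tableau of shape $\lambda/\mu$ exists if and only if $\lambda/\mu$ is a (cylindric) horizontal strip, in which case it is unique. Hence each loop Schur function occurring in the two conjectures is either $0$ or the single monomial $\prod_{\mathfrak{s}\in\lambda/\mu} x_1^{(c(\mathfrak{s})+k)}$, and similarly for the cylindric versions. Since tropicalizing a monomial is linear, $\trop\bigl(s_{\lambda/\mu}^{(k)}\bigr)(\bfX) = \sum_{r} N_r\, x_r$, where $x_r$ is the number of entries equal to $r$ in $b$ (so that $x_1^{(r)}=x_r$ under the substitution of Conjecture~\ref{conj:formula_shapes}) and $N_r$ is the number of cells $\mathfrak{s}$ with $c(\mathfrak{s})\equiv r-k \pmod n$. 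Thus for $m=1$ all tropical quantities in Conjectures~\ref{conj:formula_shapes} and~\ref{conj:formula_riggings} are governed by counting cells according to their content modulo $n$.

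First I would compute $\Phi^{-1}(b)$ directly for $b = 1^{x_1}2^{x_2}\cdots n^{x_n}\in B^{1,s}$. Reading the word right to left and applying $\delta^{-1}$, the letters $1$ contribute nothing, while inserting $n^{x_n}(n-1)^{x_{n-1}}\cdots 2^{x_2}$ extends, at each insertion of a letter $\ell$, the (single) rows of $\nu^{(1)},\dots,\nu^{(\ell-1)}$ by one box and keeps them singular. An induction on the insertions shows that $\nu^{(a)}$ is a single row of length $d_a := x_{a+1}+\cdots+x_n$, that this row is singular, and hence $J_1^{(a)} = p_{d_a}^{(a)} = -x_{a+1}$. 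Since a single factor is a single non-interacting soliton, the shape is fixed by $T_a^{\infty}$ and, by the time-evolution rule recalled in Section~\ref{sec:background}, the rigging evolves as $J_1^{(a)}\bigl((T_a^{\infty})^{t}(b)\bigr) = -x_{a+1} + t\,d_a$.

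For Conjecture~\ref{conj:formula_shapes} this is then immediate: here $\lambda(a,0)=(n-a)$ and $\lambda(a,i)=\emptyset$ for $i\geq 1$, so for $i=1$ the content count over the single row $(n-a)$ gives exactly $x_{a+1}+\cdots+x_n=d_a=\nu_1^{(a)}$, while for $i\geq 2$ the ratio is $1$ and $\nu_i^{(a)}=0$. For Conjecture~\ref{conj:formula_riggings} I would identify the shapes $\eta_{a,t}^{[1]}$ and $\widetilde\eta_{a,t}^{[1]}$ from their construction: for $m=1$ each is built from the staircase of copies of the row $(n-a)$ shifted by $n-a$ columns and $a$ rows, hence is a cylindric horizontal strip, so both loop Schur functions are single monomials and $\trop\bigl(s_{\eta_{a,t}^{[1]}}^{(\ell)}/s_{\widetilde\eta_{a,t}^{[1]}}^{(\ell)}\bigr)$ is a $\ZZ$-linear combination of the $x_r$ with coefficients given by the signed content count. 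It remains to check that this equals $-x_{a+1}+t\,d_a$ for $t$ large. The dependence on $t$ is controlled by Theorem~\ref{thm:eventually_stabilizes} together with the content computation just made (the increment per time step is $d_a$), so it suffices to pin down the additive constant, which is a single finite content count at a convenient value of $t$.

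The main obstacle will be the bookkeeping in the last step: reading off $\eta_{a,t}^{[1]}$ and $\widetilde\eta_{a,t}^{[1]}$ precisely from the cut-and-superimpose construction when $m=1$, verifying that they are genuine (cylindric) horizontal strips so that the loop Schur functions do not vanish, and carrying out the content-residue count together with the shift $\ell=-\ell(\mu_{a,t}^{[1]})\pmod n$ so that the disconnected/period contributions cancel (cf.\ the remark following Example~\ref{ex:sl2}) and the numerator and denominator counts combine to the single affine function $-x_{a+1}+t\,d_a$. Everything else reduces to the direct $\delta^{-1}$ computation of the second paragraph or to the monomial reduction of the first.
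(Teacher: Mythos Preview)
Your proposal is correct and follows essentially the same route as the paper: both compute $\Phi^{-1}(b)$ directly via $\delta^{-1}$ to obtain $\nu_1^{(a)}=x_{a+1}+\cdots+x_n$ and $J_1^{(a)}=-x_{a+1}$, and both exploit that for $m=1$ every (cylindric) semistandard filling has all entries equal to $1$, reducing each loop Schur function to a single monomial and hence each tropicalization to a content-residue count. The one minor difference is in the rigging step: the paper simply evaluates the shape $\eta_{a,1}^{[1]}$ directly (it is the skew row $(2n-2a-1)/(n-a)$, whose content count gives $x_{a+2}+\cdots+x_n=\widetilde J_1^{(a)}$ after one application of $T_a^{\infty}$) rather than invoking Theorem~\ref{thm:eventually_stabilizes} to fix the slope and then pinning down the constant; since everything is already monomial, the direct check at $t=1$ is shorter and avoids the bookkeeping you flag as the main obstacle.
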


\begin{proof}
Fix an element $b \in B^{1,s}$. Recall that $x_1^{(i)}$ equals the number of $i$ entries in $b$. We will show that after adding $k$ of the letters $i$ appearing in $b$, we have $\nu^{(a)}$ is a single row partition with
\[
\nu_1^{(a)} = \begin{cases}
k + x_1^{(i+1)} + \cdots + x_1^{(n)} & \text{if } a < i, \\
x_1^{(a)} + \cdots + x_1^{(n)} & \text{if } a > i.
\end{cases}
\]
Moreover, we have that each row is singular with
\[
J_1^{(a)} = \begin{cases}
-x_1^{(a+1)} & \text{if } a+1 > i, \\
-k & \text{if } a+1 = i, \\
0 & \text{if } a+1 < i.
\end{cases}
\]

Note that $\ls$ may only possibly change the riggings on $\nu^{(1)}$.
However, if $i \neq 1$, then $\ell = \nu_1^{(1)} = k + x_1^{(i+1)} + \cdots + x_1^{(n)}$, which equals the number of entries we have added from $b$, and so the vacancy number $p_{\ell}^{(1)}$ does not change under $\ls^{-1}$.
If $i = 1$, then $\delta^{-1}$ does nothing, and the current rigged configuration will equal $\Phi^{-1}(b)$.

Next, since the resulting rows are singular under $\delta^{-1}$, and each partition has exactly one (singular) row.
So if we are adding a box to $\nu^{(a)}$, which only occurs for $a < i$, it must be to $\nu_1^{(a)}$.
Thus our description above gives each step of $\Phi^{-1}(b)$.
By induction (the base case is trivially the empty rigged configuration and no letters added), the final rigged configuration is given by
\begin{align*}
\nu_1^{(a)} & = x_1^{(a+1)} + \cdots + x_1^{(n)},
\\ J_1^{(a)} & = -x_1^{(a+1)}.
\end{align*}

We note that because of the shift and that $a \geq 1$, the cylindric semistandard tableaux of shape $n - a$ is exactly equal to the semistandard tableaux of shape $n - a$ and all entries are $1$.
Note that $\lambda(a, 1) = \emptyset$, so the corresponding (cylindric) loop Schur function is equal to $1$.
Hence, we have
\[
\nu_1^{(a)} = \trop\left( \frac{\overline{s}_{\lambda(a,0),a }^{(0)}}{\overline{s}_{\emptyset,a}^{(0)}} \right)
= \trop\left(\overline{s}_{n-a,a}^{(0)}\right)
= x_1^{(a+1)} + \cdots + x_1^{(n)},
\]
and Conjecture~\ref{conj:formula_shapes} holds.
Furthermore, let $(\nu, \widetilde{J}) = T_a^{\infty}(\nu, J)$, and we have
\[
\widetilde{J}_1^{(a)} = x_1^{(a+2)} + \cdots + x_1^{(n)}
= \trop\left( \frac{s_{n-a-1}^{(0)}}{s_{\emptyset}^{(0)}} \right).
\]
Recall that we remove one row from the bottom when constructing $\eta_{a,1}^{[0]} = 2n - 2a - 1 / n - a$, but we would also shift by $n-a$.
Hence the loop Schur functions above agree with those given by Conjecture~\ref{conj:formula_riggings}, and so Conjecture~\ref{conj:formula_riggings} holds.
Note $J_1^{(a)} \leq 0$, and if $x_1^{(a)} > 0$, one time evolution is the first time the riggings $J^{(a)}$ are all non-negative.
When $x_1^{(a)} = 0$, all riggings are non-negative after zero time evolutions, but we have $J_1^{(a)} = 0 = \trop(s_{\emptyset}^{(0)})$.
\end{proof}

Note that in the proof of Proposition~\ref{prop:single_factor}, we see the necessity of applying sufficiently many time evolutions.


\section{Equating with the KSS bijection}
\label{sec:KSS_bijection}

\subsection{Fusing columns}

Recall that $\gamma$ and $\ls$ are the column splitting operations on rigged configurations and tableaux, respectively. We call the inverse process column fusing.

If we write the element in vector format, the operation of $\ls^{-1}$ is given by
\[
\column{0 \\ \vdots \\ 0 \\ 1 \\ 0 \\ \vdots \\ 0} \otimes
\column{0 \\ \vdots \\ 0 \\ x_1^{(k)} \\ x_1^{(k+1)} \\ \vdots \\ x_1^{(n)}} \otimes
\column{x_2^{(2)} \\ \vdots \\ x_2^{(k)} \\ x_2^{(k+1)} \\ x_2^{(k+2)} \\ \vdots \\ x_2^{(n+1)}}
\otimes \cdots \otimes
\column{x_m^{(m)} \\ \vdots \\ x_m^{(k+m-2)} \\ x_m^{(k+m-1)} \\ x_m^{(k+m)} \\ \vdots \\ x_1^{(n+m-1)}}
\longmapsto
\column{0 \\ \vdots \\ 0 \\ 0 \\ 0 \\ \vdots \\ 0} \otimes
\column{0 \\ \vdots \\ 0 \\ x_1^{(k)} + 1 \\ x_1^{(k+1)} \\ \vdots \\ x_1^{(n)}} \otimes
\column{x_2^{(2)} \\ \vdots \\ x_2^{(k)} \\ x_2^{(k+1)} \\ x_2^{(k+2)} \\ \vdots \\ x_2^{(n+1)}}
\otimes \cdots \otimes
\column{x_m^{(m)} \\ \vdots \\ x_m^{(k+m-2)} \\ x_m^{(k+m-1)} \\ x_m^{(k+m)} \\ \vdots \\ x_m^{(n+m-1)}}
\]

For the purposes of our proof, we will need to extend the definition of the KR crystals to $B^{1,0} \iso B(0)$, which is a $U_q'(\g)$-crystal with a single element $u_0$ such that
\[
\varepsilon_i(u_0) = \varphi_i(u_0) = 0,
\qquad\qquad
\wt(u_0) = 0,
\]
for all $i \in I$.

\begin{remark}
\label{remark:trivial_factor}
We note that the addition of a factor of $B^{1,0}$ on the left does not change the evaluation of the tropicalization of a (cylindric) loop Schur function. This is because adding a row with all entries $0$ on top of a (cylindric) semistandard tableau trivially satisfies the (cylindric) semistandard conditions and does not effect the weight since $x_0^{(a)} = 0$ for all $a \in I_0$.
\end{remark}

\begin{prop}
Let $b = \young(k) \otimes b_m \otimes \cdots \otimes b_1 \in B^{1,1} \otimes B^{1,s_m} \otimes \cdots \otimes B^{1,s_1}$ such that $x_{1}^{(a)} = 0$ for all $a < k$. Then we have
\begin{align*}
\trop\left(\frac{s^{(\ell)}_{\eta_{a,t}^{[i]}}}{s^{(\ell)}_{\widetilde{\eta}_{a,t}^{[i]}}}\right)(\bfX) & = \trop\left(\frac{s^{(\ell)}_{\eta_{a,t}^{[i]}}}{s^{(\ell)}_{\widetilde{\eta}_{a,t}^{[i]}}}\right)(\dot{\bfX}),
\\ \trop\left(\frac{\overline{s}^{(0)}_{\lambda(a, i-1),a}}{\overline{s}^{(0)}_{\lambda(a, i),a}}\right)(\bfX) & = \trop\left(\frac{\overline{s}^{(0)}_{\lambda(a, i-1),a}}{\overline{s}^{(0)}_{\lambda(a, i),a}}\right)(\dot{\bfX}) ,
\end{align*}
where $\dot{\bfX}$ are the variables of $\ls^{-1}(b)$ and we consider the leftmost factor to be $B^{1,0}$. Moreover, we have
\begin{align*}
\trop\left(\frac{s^{(\ell)}_{\eta_{a,t}^{[i]}}}{s^{(\ell)}_{\widetilde{\eta}_{a,t}^{[i]}}}\right)(\dot{\bfX}) & = J_i^{(a)},
\\ \trop\left(\frac{\overline{s}^{(0)}_{\lambda(a, i-1),a}}{\overline{s}^{(0)}_{\lambda(a, i),a}}\right)(\dot{\bfX}) & = \nu_i^{(a)},
\end{align*}
where $(\nu, J) = \gamma^{-1}\bigl(\Phi(b)\bigr)$,
if Conjecture~\ref{conj:formula_riggings} and Conjecture~\ref{conj:formula_shapes}, respectively, hold for $b$.
\end{prop}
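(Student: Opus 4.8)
The plan is to handle the two pairs of equalities separately, since the second pair reduces almost formally to the first once we use that $\gamma$ is the identity on rigged-configuration data. Since $b$ lies in a genuine tensor product of factors $B^{1,s}$ (one being $B^{1,1}$), Conjecture~\ref{conj:formula_riggings} and Conjecture~\ref{conj:formula_shapes}, assumed to hold for $b$, give $\trop\bigl(s^{(\ell)}_{\eta_{a,t}^{[i]}}/s^{(\ell)}_{\widetilde{\eta}_{a,t}^{[i]}}\bigr)(\bfX) = J_i^{(a)}$ and $\trop\bigl(\overline{s}^{(0)}_{\lambda(a,i-1),a}/\overline{s}^{(0)}_{\lambda(a,i),a}\bigr)(\bfX) = \nu_i^{(a)}$ for $\Phi^{-1}(b)$. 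Because $\gamma$ acts as the identity on the underlying partitions and riggings—only the ambient tensor product, and hence the vacancy numbers, being reinterpreted—the rigged configuration $(\nu,J) = \gamma^{-1}\bigl(\Phi^{-1}(b)\bigr)$ carries exactly this data, and by the commuting square defining $\Phi$ through $\gamma$ and $\ls$ it also equals $\Phi^{-1}\bigl(\ls^{-1}(b)\bigr)$. Thus, granting the first pair, the second pair follows by transporting the $\bfX$-evaluations to $\dot{\bfX}$.

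The heart of the matter is therefore the invariance of the tropical evaluation under $\bfX \rightsquigarrow \dot{\bfX}$. First I would note that $b$ and $\ls^{-1}(b)$ have the same number of tensor factors (the factor $\young(k)$ is replaced by $B^{1,0}$), so the shapes $\eta_{a,t}^{[i]}$, $\widetilde{\eta}_{a,t}^{[i]}$, and $\lambda(a,i)$ coincide; each equality compares one and the same (cylindric) loop Schur function evaluated at two points. Writing $\trop\bigl(s^{(\ell)}_{\eta}\bigr)(\bfX) = \min_{\mcT} \sum_{\mathfrak{s} \in \eta} V_{\bfX}\bigl(\mcT(\mathfrak{s}),\, c(\mathfrak{s})+\ell\bigr)$, where $V_{\bfX}(i,c)$ is the value of $x_i^{(c)}$ and the minimum runs over (cylindric) semistandard fillings, I would compare $V_{\bfX}$ and $V_{\dot{\bfX}}$ cell by cell. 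The explicit description of $\ls^{-1}$ shows these agree except that the single box recording the letter $k$ in the leftmost factor contributes $1$ under $\bfX$ but $0$ under $\dot{\bfX}$—the new leftmost factor being $B^{1,0}$, which is discarded by Remark~\ref{remark:trivial_factor}—while the value recording the count of $k$'s in the adjacent factor increases by $1$.

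Consequently, for each fixed filling the tropical weight changes by $-N_1(\mcT) + N_2(\mcT)$, where $N_1$ counts the cells filled from the leftmost factor with content $\equiv k-\ell \pmod{n}$ and $N_2$ counts the cells filled from the adjacent factor with content $\equiv k+1-\ell \pmod{n}$. To conclude that the two minima coincide I would build a weight-preserving correspondence between the minimizing fillings for the two evaluations, realized by relocating the box labeled $k$ out of the top factor and into the adjacent factor, thereby trading one cell of the first type for one of the second. The hypothesis $x_1^{(a)} = 0$ for all $a < k$—equivalently, that the factor adjacent to $\young(k)$ contains no letter smaller than $k$, which is exactly the condition making $\ls^{-1}$ a legitimate column fusion—guarantees that the cells of small content in that factor carry value $0$, so the relocation neither violates the (cylindric) column-strict and row-weak conditions nor alters the total weight.

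The main obstacle is precisely the construction and bijectivity of this correspondence on minimizing fillings: I must verify that relocating the box across the cut of the cylindric shape respects the cylindric semistandard condition along the periodic boundary, and that the shift $\ell = -\ell(\mu_{a,t}^{[i]}) \pmod{n}$ aligns the two affected content classes correctly. I expect this to come down to a local exchange argument in the spirit of the superimposition technique of Lemma~\ref{lemma:single_period}, restricted to the two relevant content classes. For the cylindric functions in the second equality one could alternatively avoid tableaux entirely by using that cylindric loop Schur functions lie in $\ZZ[\mathbf{E}]$ and tracking the comparatively simple effect of the fusion on the generators $e_k^{(a)}$.
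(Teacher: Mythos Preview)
Your framing is sound: the second pair of equalities does reduce to the first via $\gamma$ being the identity on the underlying rigged-configuration data, and for a fixed filling the passage $\bfX \to \dot{\bfX}$ changes the tropical weight by exactly $-N_1(\mcT)+N_2(\mcT)$ as you write. Where your plan diverges from the paper is the mechanism for concluding that the two minima agree. You propose a correspondence between the minimizing \emph{fillings} for the two evaluations, built by relocating entries between the top two factors and appealing to a superimposition-style exchange. The paper does something simpler: it fixes a single minimizer $T$ and shows $N_1(T)=N_2(T)$ directly, so that the very same tableau has equal weight under both evaluations and no relocation is needed. The bijection between the two cell types is just ``look one row down'': entries from the leftmost factor can sit only in the first row, those from the adjacent factor only in the first two rows; a short case analysis ($k=1$ separately, then general $k$ with first-row versus second-row subcases) shows that in a minimizer each top-factor cell of the critical color has a next-factor cell of the shifted color immediately beneath it, and conversely. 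The hypothesis $x_1^{(a)}=0$ for $a<k$ enters precisely to make the step ``replace a first-row entry and everything to its left by the top-factor value'' weight-nonincreasing.

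Your route could be made to work, but it carries extra bookkeeping: you would have to relabel several cells simultaneously rather than ``one box'', verify that the result is again (cylindric) semistandard, and then argue separately that it is a minimizer for the other evaluation. The paper sidesteps all of this by never leaving the original tableau. Neither the superimposition idea of Lemma~\ref{lemma:single_period} nor the alternative via the generators $e_k^{(a)}$ of $\ZZ[\mathbf{E}]$ is used.
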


\begin{proof}
From Remark~\ref{remark:trivial_factor}, the factor of $B^{1,0}$ on the left does not alter anything.
Consider a cylindric semistandard tableau $T$ of shape $\omega = (n-a)^{m+1}$ that realizes the minimum of $\trop(\overline{s}^{(0)}_{\omega})(\bfX)$.
It is sufficient to show that entries $0$ with color $k-1$ are in bijection with entries $1$ of color $k$.
Note that any entry with $0$ must appear in the first row of $T$ and those with $1$ on the first or second row.

Suppose $k = 1$. Note that there will never appear an entry $0$ with color $n$ as we color the first row as $(n-1, \dotsc, a + 1, a)$ and $n - (n-a) = a \geq 1$.
Similarly, there does not exist an entry $1$ with a color $1$ in the second row.
Now suppose there exists a $1$ with color $1$ in the first row.
We can replace the $1$ with color $1$ in the first row with a $0$, which will strictly decrease the weight.
This contradicts the weight minimality of $T$.
Hence there does not exist either a $1$ with color $1$ nor a $0$ with color $0$.

Suppose there exits a $1$ with color $k$ on the first row. We note that we can replace every entry strictly to the left with a $0$, and the resulting tableau will have at most the same weight as $T$. Thus we can replace the $1$ in the first row with a $0$ and weakly decrease the weight. Hence, we can replace the $1$ with a $0$ and not increase the weight.

Next, if a $1$ with color $k$ appears on the second row, then immediately above it, there must be an entry $0$ and it will have color $k-1$.
Now suppose there exists an entry $0$ in column $c$ with color $k-1$ in the first row.
If there does not exist an entry $1$ below in column $c$, then we can replace the entry $0$ with $1$.
However, this contradicts the weight minimality of $T$ as the resulting tableau will have strictly less weight.
Hence there must exists an entry of $1$ in column $c$ as desired.
\end{proof}

Thus from the recursive definition of $\Phi$, we can reduce Conjecture~\ref{conj:formula_riggings} and Conjecture~\ref{conj:formula_shapes} to showing the following.

\begin{conj}
Let $b = \young(k) \otimes b_m \otimes \cdots \otimes b_1 \in B^{1,1} \otimes B^{1,s_m} \otimes \cdots \otimes B^{1,s_1}$. Suppose
\begin{align*}
\trop\left(\frac{s^{(\ell)}_{\eta_{a,t}^{[i]}}}{s^{(\ell)}_{\widetilde{\eta}_{a,t}^{[i]}}}\right)(\widehat{\bfX}) & = \widehat{J}_i^{(a)},
\\ \trop\left(\frac{\overline{s}^{(0)}_{\lambda(a, i-1),a}}{\overline{s}^{(0)}_{\lambda(a, i),a}}\right)(\widehat{\bfX}) & = \widehat{\nu}_i^{(a)},
\end{align*}
where $(\widehat{\nu}, \widehat{J}) = \delta\bigl(\Phi(b)\bigr)$ and $\widehat{\bfX}$ are the variables of $\operatorname{lb}(b)$.
Then we have
\begin{align*}
\trop\left(\frac{s^{(\ell)}_{\eta_{a,t}^{[i]}}}{s^{(\ell)}_{\widetilde{\eta}_{a,t}^{[i]}}}\right)(\bfX) & = J_i^{(a)},
\\ \trop\left(\frac{\overline{s}^{(0)}_{\lambda(a, i-1),a}}{\overline{s}^{(0)}_{\lambda(a, i),a}}\right)(\bfX) & = \nu_i^{(a)},
\end{align*}
where $(\nu, J) = \Phi(b)$.
\end{conj}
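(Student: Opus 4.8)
The plan is to read this statement as the inductive step, for the box-removal map $\delta$, of the recursion defining $\Phi$ --- dual to the column-splitting step handled in the preceding Proposition. Passing from $\operatorname{lb}(b) = b_m \otimes \cdots \otimes b_1$ (with variables $\widehat{\bfX}$) to $b = \young(k) \otimes b_m \otimes \cdots \otimes b_1$ (with variables $\bfX$) amounts to adjoining one new leftmost factor whose coordinate vector has a single $1$ in position $k$ and $0$'s elsewhere. Under the convention $x_j^{(a+j-1)} = \varphi_a(b_{m+1-j})$, this prepends a new first variable column $\bfx_1$ with $x_1^{(k)} = 1$ and $x_1^{(c)} = 0$ for $c \neq k$, and shifts the old $\widehat{\bfX}$ into $\bfx_2, \dotsc, \bfx_{m+1}$. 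On the combinatorial side, the corresponding move on rigged configurations is $\delta^{-1}$ with return value $k$, which adds one box to a singular row of each $\nu^{(a)}$ for $a < k$, leaves $\nu^{(a)}$ unchanged for $a \geq k$, and resets the affected riggings so the new rows stay singular. Thus the statement reduces to showing that tropicalizing the (cylindric) loop Schur functions transforms, under the addition of this new column, in exactly the way $\delta^{-1}$ carries $(\widehat{\nu},\widehat{J})$ to $(\nu,J)$.

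First I would treat the shape formula. Since the new variable is free (contributes $0$) in every color except color $k$, where it contributes $1$, a minimal-weight $a$-cylindric tableau computing $\trop(\overline{s}^{(0)}_{\lambda(a,i),a})(\bfX)$ will greedily fill cells with this new smallest entry wherever the cylindric semistandard conditions allow. Mirroring the exchange argument of the preceding Proposition, I would show that after greedy filling the only net cost comes from cells forced to carry color $k$, and that $\trop(\overline{s}^{(0)}_{\lambda(a,i-1),a})(\bfX) - \trop(\overline{s}^{(0)}_{\lambda(a,i),a})(\bfX)$ exceeds its $\widehat{\bfX}$-value by precisely the number of boxes $\delta^{-1}$ adds to the $i$-th row of $\nu^{(a)}$, namely $1$ when $a < k$ and $0$ when $a \geq k$. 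This should follow from comparing the nested shapes $\lambda(a,i-1)$ and $\lambda(a,i)$ under the same greedy rule, together with the content shift that places color $n$ in the upper-left corner.

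Next I would carry out the analogous, harder, computation for the riggings, showing that the change in $\trop\bigl(s^{(\ell)}_{\eta_{a,t}^{[i]}} / s^{(\ell)}_{\widetilde{\eta}_{a,t}^{[i]}}\bigr)(\bfX)$ reproduces the vacancy-number update $\widetilde{J}_{i^{(a)}}^{(a)} = p_{\ell^{(a)}}^{(a)}$ that $\delta$ imposes to keep the modified row singular. Here I would invoke Theorem~\ref{thm:eventually_stabilizes}, so that only the stabilized periodic part of $\eta_{a,t}^{[i]}$ matters, and Lemma~\ref{lemma:single_period}, to reduce the weights of the long cylindric shapes to weights of single fundamental periods; then I would match the effect of the new free entry against the recursive change of the vacancy numbers $p_\ell^{(a)}$ as one box is added. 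Feeding these two computations into the two commuting squares defining $\Phi$, and combining with the already-established column-splitting Proposition for the $\gamma$/$\ls$ half, would yield the desired reduction of Conjecture~\ref{conj:formula_shapes} and Conjecture~\ref{conj:formula_riggings} to this $\delta$-step.

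The hard part will be the rigging step. The shapes $\lambda(a,i)$ are explicit ribbon-strip data, so the greedy/exchange analysis for the shape formula is tractable; but the shapes $\eta_{a,t}^{[i]}$ and $\widetilde{\eta}_{a,t}^{[i]}$ grow with the time parameter $t$, and their minimal cylindric tableaux admit no simple closed form. Matching the tropical minimum to the vacancy-number update requires controlling how the new free entry propagates across the cylindric boundary and how it interacts with the ``weakly decreasing lengths as $a$ decreases'' rule governing $\delta^{-1}$. It is precisely this interaction --- the compatibility of the geometric column-addition with the recursive singular-row selection performed by $\delta$ --- that appears to resist a clean argument and keeps the statement at the level of a conjecture.
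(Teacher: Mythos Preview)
The paper does not prove this statement: it is stated as a \emph{conjecture} and left open. The sentence immediately preceding it reads ``Thus from the recursive definition of $\Phi$, we can reduce Conjecture~\ref{conj:formula_riggings} and Conjecture~\ref{conj:formula_shapes} to showing the following,'' after which the conjecture is stated with no proof attempt whatsoever. So there is no ``paper's own proof'' to compare against.

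Your proposal is not a proof either, and you appear to recognize this. What you have written is a reasonable outline of what a proof would have to accomplish --- identify the passage from $\widehat{\bfX}$ to $\bfX$ with prepending a column supported in a single color, and then show that the tropical minima of the (cylindric) loop Schur ratios move exactly as $\delta^{-1}$ moves $(\widehat{\nu},\widehat{J})$ to $(\nu,J)$. Your final paragraph correctly isolates the obstruction: the greedy/exchange argument that works for the column-fusing Proposition does not obviously extend to control how the new free entry interacts with the singular-row selection in $\delta^{-1}$, particularly for the rigging side where the shapes $\eta_{a,t}^{[i]}$ grow with $t$. That is exactly why the paper leaves this as a conjecture.

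One small correction to your framing: you write that $\delta^{-1}$ ``adds one box to a singular row of each $\nu^{(a)}$ for $a < k$,'' but the actual rule adds a box to a singular row of $\nu^{(a)}$ for $1 \le a \le k-1$ subject to the weakly-decreasing-length constraint as $a$ decreases from $k-1$ to $1$; the selection of \emph{which} singular row receives the box is part of what makes the matching delicate. Your sketch glosses over this selection, and any genuine proof would have to confront it directly.
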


\section*{Acknowledgements}

The author thanks Pavlo Pylyavskyy for numerous discussions, help in developing the conjectural formula presented in this work, and the proof of Lemma~\ref{lemma:single_period}.
The author also thanks Thomas Lam, Gabriel Frieden, and Reiho Sakamoto for comments on early drafts of this manuscript and discussions.
Additionally, the author thanks Anne Schilling for comments on an early draft of this manuscript.
Finally, the author thanks the anonymous referees for their useful comments.
This work benefited from computations using {\sc SageMath}~\cite{sage, combinat}.

\appendix

\section{{\sc SageMath} code}

We give our {\sc SageMath}~\cite{sage} code that was used to test Conjecture~\ref{conj:formula_riggings}.

We start by giving the functions to generate the tropicalized loop Schur functions $\trop(s_{\mu}^{(r)})$:
\begin{lstlisting}
def loop_schur(m, n, mu, r):
    SSYT = SemistandardSkewTableaux(mu, max_entry=m)
    vars = [var(','.join('x%s_%s'%(i+1,j+1) for i in range(n)))
            for j in range(m)]
    entry = lambda T, c: T[c[0]][c[1]]
    return min_symbolic(sum(vars[entry(T,c)-1][(c[0]-c[1]+r-1)%n]
                            for c in T.cells())
                        for T in SSYT)
\end{lstlisting}
the rigging $J_r^{(a+1)}$ (the $a+1$ is because {\sc SageMath} uses $0$-based indexing):
\begin{lstlisting}
def generate_rc_data(m, n, a, r, data, num=6):
    state = [[data[i+n*j] for i in range(n)] for j in range(m)]
    sizes = [[1, sum(st)] for st in state]
    construct = crystals.TensorProductOfKirillovReshetikhinTableaux
    KRT = construct(['A', n-1, 1], sizes)
    elt = KRT(pathlist=[sum(([i+1]*c for i,c in enumerate(st)), [])
                        for st in state])
    rc = elt.to_rigged_configuration()
    if not rc[a]:  # An empty partition
        return [0]*num
    return [rc[a].rigging[r] + i*rc[a][r] for i in range(num)]
\end{lstlisting}
and evaluating the tropical loop Schur function:
\begin{lstlisting}
def evaluate(f, m, n, data):
    d = {'x%s_%s'%((i+j)%n+1, j+1): data[i+n*j]
         for i in range(n) for j in range(m)}
    return f(**d)
\end{lstlisting}

Next we construct Example~\ref{ex:sl2} and verify Conjecture~\ref{conj:formula_riggings} on a data set:
\begin{lstlisting}
sage: numer = loop_schur(3,2, [[4,4,4,3], [3,2,1]], 0)
sage: denom = loop_schur(3,2, [[4,3,2,1], [3,2,1]], 0)
sage: data = [10, 4, 2, 4, 6, 2]
sage: evaluate(numer, 3, 2, data) - evaluate(denom, 3, 2, data)
30
sage: generate_rc_data(2, 3, 0, 0, data)
[-6, 6, 18, 30, 42, 54]
\end{lstlisting}

\bibliographystyle{alpha}
\bibliography{rc_shape}{}
\end{document}